\documentclass{scrartcl}

\usepackage[utf8]{inputenc}
\usepackage[english]{babel}

\usepackage{amsmath}
\usepackage{amssymb}
\usepackage{amsthm}
\usepackage{booktabs}
\usepackage[noadjust]{cite}
\usepackage{enumerate}
\usepackage{float}
\usepackage{ifthen}
\usepackage{url}
\usepackage{adjustbox}
\usepackage{hyperref}
\usepackage{microtype}

\newtheorem{fact}{Fact}[section]
\newtheorem{theorem}{Theorem}[section]
\newtheorem{lemma}[theorem]{Lemma}
\newtheorem{corollary}[theorem]{Corollary}
\theoremstyle{definition}
\newtheorem{definition}[theorem]{Definition}

\providecommand{\FF}{\mathbb{F}}
\providecommand{\RR}{\mathbb{R}}
\providecommand{\cC}{\mathcal{C}}
\providecommand{\cP}{\mathcal{P}}
\providecommand{\scrA}{\mathcal{A}}
\providecommand{\scrC}{\mathcal{C}}
\providecommand{\scrE}{\mathcal{E}}
\providecommand{\scrR}{\mathcal{R}}
\providecommand{\diag}{\text{diag}}

\newcommand{\gauss}[2]{\ifthenelse{\equal{#2}{1}}{[#1]}{\genfrac{[}{]}{0pt}{}{#1}{#2}}}
\newcommand{\gaussstar}[2]{\genfrac{[}{]}{0pt}{}{#1}{#2}}

\newcommand\numberthis{\addtocounter{equation}{1}\tag{\theequation}}

\title{New and Updated Semidefinite Programming Bounds for Subspace Codes}

\author{
Daniel Heinlein\thanks{Department of Communications and Networking, Aalto University, Finland, \href{mailto:daniel.heinlein@aalto.fi}{daniel.heinlein@aalto.fi}. This research was supported by the Academy of Finland, Grant \#289002.},
Ferdinand Ihringer\thanks{Department of Mathematics:  Analysis, Logic and Discrete Mathematics, Ghent University, Belgium, \href{mailto:ferdinand.ihringer@ugent.be}{ferdinand.ihringer@ugent.be} . The author is supported by a postdoctoral fellowship of the Research Foundation --- Flanders (FWO).}
}

\begin{document}
\maketitle

\begin{abstract}
  We show that $A_2(7, 4) \leq 388$ and, more generally, $A_q(7, 4) \leq (q^2-q+1) \gauss{7}{1} + q^4 - 2q^3 + 3q^2 - 4q + 4$
  by semidefinite programming for $q \leq 101$.
  Furthermore, we extend results by Bachoc et al. on SDP bounds for $A_2(n, d)$, where $d$ is odd and $n$ is small, to
  $A_q(n, d)$ for small $q$ and small $n$.

  \textbf{Keywords}: Semidefinite programming, network coding, subspace code, subspace distance, finite projective space, coherent configuration.

  \textbf{MSC}: Primary: 51E20; Secondary: 05B25, 11T71, 94B25.
\end{abstract}

\section{Introduction}

By $\cP(V)$ we denote the set of all subspaces in a finite dimensional vector space $V$ over a finite field of order $q$.
The set $\cP(V)$ forms a metric space with respect to the \emph{subspace metric} $d_s(U,W) = \dim(U + W) - \dim(U \cap W)$.
The space $(\cP(V),d_s)$ plays an important role in random linear network coding and was introduced by K\"{o}tter and Kschischang in~\cite{MR2451015} to describe error-detecting and -correcting transmission of informations in the subspace channel model.
A subset $\cC$ of $\cP(V)$ is called \emph{subspace code} and its elements are called \emph{codewords}.
The subspace distance of $\cC$ is given by $d_s(\cC)=\min\{d_s(U,W) : U, W \in V \text{ and } U \ne W\}$.
We refer the reader to Subsection~\ref{subsec:notation} for a more detailed introduction to the used terminology.

The vector $(x_0(\cC), \ldots, x_n(\cC))$ with $x_k(\cC)$ as the number of $k$-subspaces in $\cC$ is called the \emph{dimension distribution} of $\cC$ and the set $K(\cC) = \{ \dim(U) : U \in \cC \}$ contains the dimensions of all codewords of $\cC$.
We drop the reference to $\cC$ if it is clear by the context. Then $(n,N,d;K)_q$ abbreviates the parameters of $\cC$; $\cC \subseteq \cP(\FF_q^n)$, $N=|\cC|$, $d \le d_s(\cC)$, and $K(\cC) \subseteq K$.
If $K(\cC)=\{k\}$, say, then $\cC$ is called \emph{constant-dimension code} (CDC) and is abbreviated as $(n,N,d;k)_q$.
In the other extremal case, i.e., $K=\{0,\ldots,n\}$, the parameters of an (unrestricted) subspace code are abbreviated as $(n,N,d)_q$.

The maximum cardinality $N$ of an $(n,N,d;K)_q$ subspace code is denoted as $A_q(n,d;K)$ and the simpler notation $A_q(n,d;k)$ in the constant-dimension case and $A_q(n,d)$ in the unrestricted case applies, too.
The determination of $A_q(n,d;K)$, or at least suitable bounds, and a classification of all non-isomorphic maximum cardinality codes is known as the \emph{main problem of subspace coding}, since it is the $q$-analog of the \emph{main problem of classical coding theory}, cf.~\cite[Page~23]{MR0465509}. 

The smallest undetermined and arguably most interesting constant-dimension code is a maximum cardinality set of planes in $\FF_2^7$ mutually intersecting in at most a point. Here the best known result is as follows:
\begin{fact}[{\cite[Theorem~2]{Fano333}}]
  We have $333 \le A_2(7,4;3) \le 381$.
\end{fact}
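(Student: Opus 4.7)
The plan is to prove the lower and upper bounds separately; the upper bound follows from a single invocation of a Johnson-type recursion, while the lower bound demands an explicit construction.

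For the upper bound, I would apply the $q$-Johnson bound
$$A_q(n,d;k) \;\le\; \left\lfloor \frac{q^n - 1}{q^k - 1}\, A_q(n-1, d; k-1) \right\rfloor,$$
whose standard proof is a double count on (point,\,codeword) incidences: every point lies in at most $A_q(n-1,d;k-1)$ codewords (via projection to the quotient), there are $\gauss{n}{1}$ points in $\cP(\FF_q^n)$ and $\gauss{k}{1}$ points per codeword. With $(q,n,d,k)=(2,7,4,3)$ the problem reduces to bounding $A_2(6,4;2)$. Two $2$-subspaces of $\FF_2^6$ have subspace distance $\ge 4$ iff they intersect trivially, so such a code is a partial line spread; since $2 \mid 6$ a complete spread exists, yielding $A_2(6,4;2) = (2^6-1)/(2^2-1) = 21$. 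Substituting gives $\lfloor (127 \cdot 21)/7 \rfloor = 381$.

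For the lower bound $333 \le A_2(7,4;3)$, I would exhibit an explicit code of that size. The canonical starting point is a \emph{lifted Gabidulin code}: fixing the pivot pattern $\{1,2,3\}$ and lifting the $256$ matrices of a rank-$2$ Gabidulin MRD code in $\FF_2^{3\times 4}$ to the planes $\langle [I_3 \mid A] \rangle$ of $\FF_2^7$ produces a constant-dimension code of size $q^{(n-k)(k-d/2+1)} = 2^{8}$ with subspace distance $4$. The remaining $77$ planes are constructed on other admissible pivot patterns via the echelon--Ferrers / linkage machinery, together with computer search to resolve cross-compatibility with the MRD core; this is the content of the construction in \cite{Fano333}.

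The main obstacle is the lower bound: no purely algebraic family is known to attain $333$, and the record constructions combine algebraic building blocks with computer-aided completion and equivalence reduction to prune the search. The upper bound, by contrast, is a one-line consequence of the Johnson recursion once the classical spread fact $A_2(6,4;2)=21$ is in place. The present paper's contribution is precisely to improve this latter bound; the fact above summarizes the state of the art prior to that improvement.
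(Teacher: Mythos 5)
Your upper bound is correct and, in substance, the same as the paper's: the paper counts lines directly (there are $\gauss{7}{2}_2 = 2667$ lines in $\FF_2^7$, each codeword contains $7$ of them, and no line is incident with two codewords), while you count point--codeword flags and invoke $A_2(6,4;2)=21$; since $2667 = 127\cdot 21$, the two double counts yield the same value $381$, and your reduction to the line spread of $\FF_2^6$ is a correct instance of the Johnson recursion. The lower bound cannot be proved in any short way --- both you and the paper ultimately defer to the explicit code of \cite{Fano333} --- but your description of how that code is obtained is inaccurate: the lifted-Gabidulin core plus echelon--Ferrers/pivot-pattern completion that you sketch is the route to the \emph{earlier} records (on the order of $329$, via expurgation--augmentation and related methods), whereas the $(7,333,4;3)_2$ code of \cite{Fano333} was found by an exhaustive search in $\operatorname{GL}(\FF_2^7)$ for subgroups that could serve as automorphism groups of large codes, followed by modification of the codes so obtained. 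Since the statement is a cited fact, this mischaracterization does not affect the truth of the bound, but the construction you describe would not by itself reach $333$, so the citation is doing more work than your sketch suggests.
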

The lower bound was derived by finding a $(7,333,4;3)_2$ CDC after modifying \emph{interesting} codes arising in an exhaustive search in the $\operatorname{GL}(\FF_2^7)$ for subgroups with the property being subgroup of automorphism groups of \emph{large} $(7,N,4;3)_2$ CDCs.
The currently best upper bound is a simple counting argument:
There are $\gauss{7}{2}_2=2667$ lines in $\FF_2^7$, each plane contains $\gauss{3}{2}_2=7$ of them and no line is incident with two codewords, hence $2667/7=381$ upper bounds the size of any $(7,N,4;3)_2$ CDC.
Any putative $(7,381,4;3)_2$ CDC is the binary analog of a Fano plane and a lot of previous work tackle its existence question~\cite{ai2016expurgation,MR3799426,etzion2015structure,kiermaier2016order,liu2014new,MR1305448,MR1419675,MR1725002,MR2589964,MR2796712,MR2801584,MR3198748,MR3398870,MR3403762,MR3444245,MR3468803,MR3542513,MR908977}.

By omitting the constraint on the dimension of codewords, one arrives at $(7,M,4)_2$ subspace codes.
Of course, a $(7,N,4;3)_2$ CDC $\cC$ can be extended to $(7,N+1,4)_2$ subspace code $\cC \cup \{\FF_q^7\}$, providing the best known lower bound $334 \le A_2(7,4)$. Due to Honold et al. we know the following:
\begin{fact}[{\cite[Theorem~4.1]{Honold2016}}]
  We have $A_2(7,4) \leq 407$.
\end{fact}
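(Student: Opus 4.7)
The plan is to bound $|\cC|$ for a putative $(7,N,4)_2$ subspace code $\cC$ by analyzing its dimension distribution $(x_0,\ldots,x_7)$, combining level-wise constant-dimension bounds with cross-level incidence constraints. The map $U \mapsto U^\perp$ (for any non-degenerate bilinear form on $\FF_2^7$) preserves the subspace distance and swaps dimensions $k \leftrightarrow 7-k$, so after dualizing it suffices to treat half of the possible dimension profiles. First I would dispose of the trivial layers: $x_0, x_7 \leq 1$, and $x_0=1$ forces every other codeword to have dimension at least $4$ since $d_s(\{0\},U)=\dim U$, dually for $x_7$. These boundary profiles are handled separately and shown not to achieve the optimum.

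For the generic case I would feed in the strongest available CDC bounds $x_k \leq A_2(7,4;k)$: Fact~1.1 gives $x_3 \leq 381$ and by duality $x_4 \leq 381$; Johnson-type arguments yield $x_1 \leq \gauss{7}{1}_2 = 127$, $x_2 \leq 42$ (since codewords of dimension $2$ must pairwise intersect trivially, and $\lfloor 127/3 \rfloor = 42$), and symmetric bounds for $x_5, x_6$. These alone sum to far more than $407$, so the decisive inputs are the \emph{cross-level} constraints. From $d_s(U_k,W_\ell)\geq 4$ with $k\leq\ell$ one obtains $\dim(U_k\cap W_\ell)\leq\lfloor(k+\ell-4)/2\rfloor$; in particular a $1$-codeword is disjoint from every codeword of dimension at least $3$, and a $3$-codeword meets every $4$-codeword in at most a line. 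Each such pairwise constraint is converted into a global inequality by double-counting triples $(P,U_k,W_\ell)$ with $P\subseteq U_k\cap W_\ell$ and $\dim P$ chosen so the count becomes tight, then bounding the total flag count in $\FF_2^7$ by the ambient geometry.

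Assembling the CDC bounds, the cross-level incidence inequalities, and the boundary analysis yields a linear program in $(x_0,\ldots,x_7)$ whose optimum is the claimed $407$; equivalently, one exhibits explicit dual multipliers certifying $\sum_k x_k \leq 407$. The main obstacle is sharpening the interaction between the two middle layers: the naive sum $x_3+x_4\leq 762$ is wildly loose, and one must show that a large $x_3$ forces $x_4$ to drop substantially. This presumably uses an intermediate incidence count (such as line--plane or plane--solid flag counts in $\FF_2^7$) whose total is simultaneously bounded above by the geometry and below by a carefully chosen linear combination of $x_3$ and $x_4$. Identifying the \emph{right} combination, and verifying that the resulting LP optimum is exactly $407$ rather than some value in the wide gap between $381$ and $762$, is where the real work lies.
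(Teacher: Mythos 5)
This statement is not proved in the paper at all: it is quoted verbatim from Honold, Kiermaier and Kurz \cite[Theorem~4.1]{Honold2016}, and the present paper only recounts (and later sharpens) the structure of that argument in Section~\ref{sec:main_thm}. Your outline does match the cited proof's overall architecture --- dispose of dimensions $0,1,6,7$, bound each layer by $A_2(7,4;k)$, and close the gap with cross-level inequalities fed into a maximization over dimension distributions. But as it stands there are two genuine problems. First, a factual error: $x_1\leq\gauss{7}{1}_2=127$ is false. Any two distinct points are at subspace distance $2<4$, so $A_2(7,4;1)=1$ and $x_1\le 1$ (dually $x_6\le 1$); with $x_1\le 127$ in your LP the optimum would sail far past $407$, so this is not a harmless overestimate. (Similarly the sharp value is $x_2\le 41$ by Beutelspacher's partial-spread bound, Lemma~\ref{lem:linespread}, not the na\"ive $42$.)

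Second, and more seriously, the entire content of the theorem lies in the cross-level inequalities you defer with ``presumably'' and ``where the real work lies.'' The cited proof rests on explicit functions $x_3\le f(x_4)$, $x_3\le g(x_2)$, $x_3\le h(x_5)$ (their Lemmata~4.2--4.4; e.g.\ $x_3+x_4\le 381$ is \cite[Lemma~4.2.ii]{Honold2016}, obtained by counting the $2667$ lines of $\FF_2^7$ against the $7$ lines per plane and $35$ per solid, using that a codeword plane and a codeword solid share no line while two codeword solids share at most one), together with a careful case analysis maximizing $x_2+x_3+x_4+x_5$ over the region these carve out; this yields $406$, and $x_1+x_6\le 1$ gives $407$. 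Without deriving these specific inequalities and verifying the optimum, the proposal is a plan rather than a proof: the honest per-layer bounds alone give $1+41+381+381+41+1$, nowhere near $407$, so every bit of the improvement has to come from the step you have left blank.
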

We improve this to:
\begin{theorem}\label{thm:A_q74_bnd_q_eq_2}
  We have $A_2(7, 4) \leq 388$.
\end{theorem}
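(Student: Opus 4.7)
The plan is to derive the bound by semidefinite programming on the coherent configuration that $\operatorname{GL}(\FF_2^7)$ induces on pairs of elements of $\cP(\FF_2^7)$, in the spirit of the Schrijver/Bachoc--Vallentin method applied to the unrestricted subspace setting. The orbits of this action on ordered pairs are indexed by triples $(k,\ell,i)$ where $k=\dim U$, $\ell=\dim W$, and $i=\dim(U\cap W)$, so the subspace distance is $k+\ell-2i$. For a subspace code $\cC$, define nonnegative variables $x_{k,\ell,i}$ proportional to the number of ordered pairs $(U,W)\in\cC^2$ realizing orbit $(k,\ell,i)$; the code size $N=|\cC|$ is recovered from the diagonal terms $x_{k,k,k}$.

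The next step is to write down the linear constraints that any subspace code of distance $\ge 4$ must satisfy: (i) $x_{k,\ell,i}=0$ whenever $0<k+\ell-2i<4$, encoding the distance constraint; (ii) symmetry $x_{k,\ell,i}=x_{\ell,k,i}$; (iii) elementary counting identities relating $x_{k,\ell,i}$ to orbit sizes and Gaussian binomial coefficients; and (iv) the global normalization $\sum_k x_{k,k,k}=N$. On top of these linear constraints, one imposes positive semidefiniteness of the matrices obtained by fixing a ``base'' flag and recording, for each pair $(k,\ell)$, the $i$-indexed matrix whose PSD-ness follows from the existence of the indicator function of $\cC$ inside the Terwilliger-type algebra of the $q$-Johnson/attenuated scheme on subspaces. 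Concretely, block-diagonalizing this algebra via the irreducible constituents of the permutation representation of $\operatorname{GL}(\FF_2^7)$ on $\cP(\FF_2^7)$ yields finitely many PSD blocks whose entries are explicit linear combinations of the $x_{k,\ell,i}$ with coefficients given by the character/eigenvalue table of the configuration.

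With the SDP in place, one maximizes $N=\sum_k x_{k,k,k}$ subject to the linear and PSD constraints, which is a finite-dimensional problem; solving it numerically for $q=2$, $n=7$, $d=4$ should return a value strictly below $389$. To obtain a rigorous bound of $388$ rather than a numerical artifact, I would extract a feasible dual solution from the numerical primal-dual pair, rationalize its entries, and verify symbolically that (a) all dual PSD blocks remain PSD after rationalization (e.g.\ via an exact Cholesky or LDLT check), and (b) the dual objective evaluates to at most $388$. By weak duality this certifies $A_2(7,4)\le 388$.

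The main obstacle will be the computational and numerical one: assembling the full character table of the coherent configuration on $\cP(\FF_2^7)$ (involving all pairs of subspace dimensions from $0$ to $7$, not just a fixed dimension as in the constant-dimension case) yields a sizeable SDP, and turning its floating-point optimum into a provable rational dual certificate of value exactly $388$ requires care; conditioning issues near the optimum make it nontrivial to choose a rounding that remains strictly feasible while still improving on the previous bound of $407$. All other steps are essentially mechanical once the coherent-configuration data and block-diagonalization are in hand.
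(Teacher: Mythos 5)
Your proposal follows essentially the same route as the paper: the bound is obtained by block-diagonalizing the coherent configuration of $\operatorname{P\Gamma L}(\FF_2^7)$ acting on pairs of subspaces (the paper indexes orbits by the codimension of the intersection rather than its dimension, but this is cosmetic), imposing the distance and counting constraints on the pair variables, and solving the resulting SDP, which the authors do with the high-precision solver SDPA-GMP. Your additional step of extracting and exactly verifying a rational dual certificate is sound and in fact more rigorous than what the paper reports; the paper relies on high-precision numerics for the value $388$ and only its weaker integer-arithmetic fallback ($A_2(7,4)\le 394$) is certified against rounding error.
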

If equality holds, then the corresponding code consists up to orthogonality of $41$ lines and $347$ solids (see Lemma~\ref{lem:dist_subspace_code}).
The correspondence to constant-dimension codes shows in particular that a putative binary Fano plane would imply a $(7,382,4)_2$ subspace code and hence reducing the upper bound to less than $382$ would immediately imply the nonexistence of the binary Fano plane -- a seemingly very difficult problem.

In the general case, the best bounds are $q^8+q^5+q^4+q^2-q \le A_q(7,4;3) \le \gauss{7}{2}/\gauss{3}{2} = (q^2-q+1)\gauss{7}{1}$; the lower bound is provided by~\cite[Theorem~4]{MR3444245} and the upper bound arises again by counting lines.
In the unrestricted case, the augmentation of a CDC by $\FF_q^7$ provides again the best known lower bound of $q^8+q^5+q^4+q^2-q+1 \le A_q(7,4)$.
For the upper bound in the unrestricted case, the best previously known method is to relax the minimum distance condition from $4$ to $3$ and then to apply the integer linear programming argument from~\cite[Theorem~10]{MR2810308}.

Define the function $F(q)$ by
\begin{align*}
F(q) = 
\begin{cases}
(q^2-q+1) \gauss{7}{1} + q^4-2q^3+3q^2-4q+3 & \text{ for } q = 2,3,\\
(q^2-q+1) \gauss{7}{1} + q^4-2q^3+3q^2-4q+4 & \text{ for } q \geq 4.
\end{cases}
\end{align*}

\begin{theorem}\label{thm:A_q74_bnd}
  Let $2 \le q \leq 101$ be a prime power. We have $A_q(7, 4) \leq F(q)$.
\end{theorem}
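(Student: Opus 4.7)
The plan is to apply the same semidefinite programming relaxation that underlies Theorem~\ref{thm:A_q74_bnd_q_eq_2} for $q=2$ to every prime power $q\in\{2,\ldots,101\}$ and read off the closed form $F(q)$. The key structural point is that the coherent configuration on $\cP(\FF_q^7)$ induced by the action of $\operatorname{GL}(7,\FF_q)$ has orbits on ordered pairs of subspaces parametrised by triples $(i,j,t)$ with $0\le t\le\min(i,j)\le 7$, independently of $q$; only the matrix entries of the scheme (which are $q$-binomial polynomials in $q$) depend on $q$, and the forbidden distances $d_s=i+j-2t\in\{1,2,3\}$ are themselves $q$-free.

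Concretely, for each $q$ I would set up the primal SDP whose variables are the entries $a_{ijt}\ge 0$ of the inner distribution of a hypothetical $(7,N,4)_q$ code, impose $a_{ijt}=0$ on the forbidden orbits together with the positive semidefiniteness of the Terwilliger-algebra block matrices of the scheme, and maximise $N$ written linearly in the $a_{ijt}$. Every feasible value upper bounds $A_q(7,4)$; the optimum for $q=2$ already gave the value $388=F(2)$ in Theorem~\ref{thm:A_q74_bnd_q_eq_2}, and the same relaxation specialised to $q=3,4,\ldots,101$ is expected to yield exactly $F(q)$.

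To certify the bound rigorously across the whole range, the natural route is to exhibit, for each $q$, a dual feasible solution of value $F(q)$. The cleanest version would be a single symbolic family of Lagrange multipliers with entries rational in $q$, which would handle all $q\ge 4$ uniformly, the two exceptional cases $q\in\{2,3\}$ (where the constant term of $F$ is $3$ instead of $4$, reflecting a small additional tightness of the SDP) being dispatched separately. Since the SDP blocks are polynomial in $q$, each numerical run at a fixed $q$ can also be rationally rounded to produce a certificate just for that $q$.

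The main obstacle is precisely this rigorous certification: numerical SDP solvers recover optima agreeing with $F(q)$ essentially immediately, but converting these into provably feasible rational duals, either for each of the roughly one hundred prime powers in range separately or via one closed-form $q$-parametric certificate together with a proof of positive semidefiniteness of the resulting polynomial matrices, is delicate. The Terwilliger algebra of $\cP(\FF_q^7)$ has many irreducible blocks, and the split between $q\in\{2,3\}$ and $q\ge 4$ in the statement of $F(q)$ strongly suggests that a single uniform certificate will not quite cover all $q$, so the write-up likely has to combine a symbolic argument for $q\ge 4$ with per-case verification at $q=2$ and $q=3$.
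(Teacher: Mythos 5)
Your proposal matches the paper's proof in essence: the authors set up exactly this SDP from the coherent configuration of $\operatorname{P\Gamma{}L}(\FF_q^7)$ acting on pairs of subspaces (Table~\ref{table:irred_reps}) and verify the bound by running the high-precision solver SDPA-GMP separately for each prime power $q\le 101$, with the closed form $F(q)$ extracted analytically from the two-fiber bound $x_2+x_4\le F(q)$ of Lemma~\ref{lem:rel24}. Like you, they stop short of rational dual certificates --- the restriction to $q\le 101$ exists precisely because only per-$q$ numerical verification is available, and the uniform-in-$q$ statement remains conjectural (Theorem~\ref{thm:A_q74_bnd_gen} gives only a weaker analytic bound).
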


This gives $388, 7696, 71157, 410585$ for $q=2,3,4,5$, while the previous best known bounds
were $407, 15802, 144060, 826594$.
The bound $q \leq 101$ is chosen rather arbitrarily and we conjecture that it is unnecessary.
For general $q$, we could only show the following.

\begin{theorem}\label{thm:A_q74_bnd_gen}
    Let $2 \le q$ be a prime power. We have $A_q(7, 4) \leq (q^2-q+1) \gauss{7}{1} + 2(q^5+q^3+1)$.
\end{theorem}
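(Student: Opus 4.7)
The target bound $(q^2-q+1)\gauss{7}{1}_q + 2(q^5+q^3+1)$ equals the sum $A_q(7,4;3) + 2A_q(7,4;2)$ of the standard closed-form upper bounds on the right-hand CDCs, so my plan is to decompose $C$ by codeword dimension and bound each slice separately. Write $C = \bigcup_{k=0}^{7} C_k$ with $a_k = |C_k|$. I would first collect the standard individual bounds: $a_0, a_1, a_6, a_7 \le 1$ (the middle two because any two $1$-dimensional subspaces have distance $2$, dually for dimension $6$); $a_2, a_5 \le q^5+q^3+1$ by the Drake--Freeman/Beutelspacher partial line spread bound in $\FF_q^7$ and its orthogonal dual; and $a_3, a_4 \le (q^2-q+1)\gauss{7}{1}_q$ from the anticode/line-counting bound on $A_q(7,4;3)$, transported to dimension $4$ by the duality $A_q(7,4;4) = A_q(7,4;3)$. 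In parallel I would record the coexistence constraints forced by $d_s \ge 4$: a $0$-dimensional codeword excludes every codeword of dimension $\le 3$, dually for $7$; a $1$-dimensional codeword excludes every $2$-dimensional codeword, dually for $5$ and $6$.

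The crux of the argument is the joint bound $a_3 + a_4 \le (q^2-q+1)\gauss{7}{1}_q$, equivalently $A_q(7,4;\{3,4\}) \le A_q(7,4;3)$. The two key combinatorial facts feeding into this are (i) no element of $C_3$ is contained in any element of $C_4$, since otherwise $\dim(U_3 \cap U_4) = 3 > 1$ contradicts $d_s \ge 4$, and (ii) no $3$-dimensional subspace is contained in two distinct elements of $C_4$. Counting $3$-dimensional subspaces yields $a_3 + (q^3+q^2+q+1)a_4 \le \gauss{7}{3}_q$, but this alone is much too weak. The step I expect to be the principal obstacle is to tighten this to the claimed mixed-dimension CDC bound. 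My preferred attempt is a dualization: fix a non-degenerate bilinear form on $\FF_q^7$ and replace each $U_4 \in C_4$ by its $3$-dimensional orthogonal complement $U_4^{\perp}$, so that $\widetilde{C} = C_3 \cup \{U_4^{\perp} : U_4 \in C_4\}$ consists of $3$-dimensional subspaces. If the form can be chosen so that $\widetilde{C}$ still has minimum subspace distance $4$, then it is a CDC and the line-counting bound applies directly. Controlling the cross-distances $d_s(U_3, U_4^{\perp})$ is the delicate part and likely requires an averaging argument over bilinear forms, or, in line with the paper's other theorems, an explicit closed-form dual-feasible solution to the corresponding SDP relaxation valid uniformly in $q$.

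With that joint bound in hand, the rest is a case analysis on which of $a_0, a_1, a_6, a_7$ are nonzero. In the generic case $a_0 = a_1 = a_6 = a_7 = 0$, summation gives $|C| = (a_2+a_5) + (a_3+a_4) \le 2(q^5+q^3+1) + (q^2-q+1)\gauss{7}{1}_q$. If $a_1 = 1$ then $a_2 = 0$ by coexistence, so the contribution $a_1 + a_2 = 1 \le q^5+q^3+1$ is still absorbed into the edge term, and similarly for $a_6 = 1$ and $a_5$. Finally, if $a_0 = 1$ (dually $a_7 = 1$) then $a_1 = a_2 = a_3 = 0$ forces $|C| \le 1 + a_4 + a_5 + a_6 \le 1 + (q^2-q+1)\gauss{7}{1}_q + (q^5+q^3+1) + 1$, which is well below the target. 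The inequality thus holds in every branch.
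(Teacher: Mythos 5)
Your overall architecture matches the paper's: decompose by dimension, use $x_2,x_5\le q^5+q^3+1$ (Beutelspacher plus orthogonality), handle the extreme dimensions by the distance condition, and reduce everything to the joint bound $x_3+x_4\le(q^2-q+1)\gauss{7}{1}$. Your bookkeeping in the final case analysis is fine. The problem is that you have not proved the joint bound, and it is the entire content of the theorem for general $q$; you correctly flag it as "the principal obstacle" and then leave it unresolved. Your proposed dualization does not work: replacing each $U_4\in C_4$ by $U_4^{\perp}$ gives no control on $\dim(U_3\cap U_4^{\perp})$ in terms of $\dim(U_3\cap U_4)$. Indeed one computes $\dim(U_3\cap U_4^{\perp})=\dim(U_3^{\perp}\cap U_4)-1$, and it is perfectly possible to have $U_3\cap U_4=0$ while $U_3=U_4^{\perp}$, so a pair at distance $7$ in $C$ can collapse to distance $0$ in $\widetilde{C}$. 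An averaging argument over forms cannot rescue this, since you need a single form under which \emph{all} of the roughly $x_3x_4$ cross pairs simultaneously keep distance $\ge 4$, and the expected number of failing pairs is not small.

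The paper's proof of this step (Lemma~\ref{lem:rel34}) is exactly the "explicit dual-feasible SDP certificate" you mention in passing but do not execute: from the irreducible representations $\Delta_1$ and $\Delta_2$ of the coherent configuration of $\operatorname{P\Gamma L}(\FF_q^7)$ restricted to the fibers of planes and solids, Theorem~\ref{thm:sdp} produces two $2\times 2$ positive semidefinite matrices $N_1,N_2$ whose entries are explicit polynomials in $x_3$, $x_4$ and the pair counts $b_{332},b_{342},b_{442}$; taking the combination $N_t=N_1+t_1N_2+t_2((N_2)_{\{1\}}+(N_2)_{\{2\}})$ with explicit positive weights $t_1,t_2$ and imposing $\det(N_t)\ge 0$ eliminates the free pair counts and yields $x_3\le((q^2-q+1)\gauss{7}{1}-x_4)\bigl(1+x_4/(q\psi\gauss{3}{1}^2)\bigr)^{-1}$, hence the claim. (The paper then uses the refinement $x_1+x_3+x_4\le(q^2-q+1)\gauss{7}{1}$ and Lemma~\ref{lem:rel23} to dispose of the low- and high-dimensional codewords, much as you do.) Without carrying out some such computation — or an equivalently strong combinatorial identity, which a naive count of planes or lines does not provide, as you yourself observe — the proof is incomplete.
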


Previously, Bachoc et al. applied semidefinite programming in~\cite{Bachoc2013} to binary subspace codes with odd minimum distance and $n \le 16$. We extend their results in several ways: (1) Since Bachoc et al. computed their bounds,
several new upper bounds for small CDC codes were discovered, cf. \url{http://subspacecodes.uni-bayreuth.de/} associated with~\cite{HKKW2016Tables}. Using these new bounds, we provide an update on their
bounds (with a slightly differently chosen range of parameters). (2) We provide
bounds for $d$ even. (3) We compute bounds for $q > 2$. Our range for all these computations is mostly arbitrary,
but chosen in a way that the computations terminate in less than a week on standard hardware at the time of writing.

The paper is organized as follows. In Section~\ref{sec:prelim} we introduce basic definitions and the used theoretical framework
of semidefinite programming in coherent configurations, so that we can describe the coherent configuration and semidefinite program
which is associated with the symmetry group of the metric space $(\cP(V),d_s)$ in Section~\ref{sec:cc}.
This culminates in Section~\ref{sec:main_thm}, in which we investigate $A_q(7, 4)$ and show our main results,
and Section~\ref{sec:comps}, in which we update the SDP bounds given by Bachoc et al. 
To conclude this current overview on semidefinite programming for subspace codes,
we mention that Schrijver's SDP bound does not appear to improve any known bounds and we suggest some future work.

\section{Preliminaries}\label{sec:prelim}

\subsection{Subspace Codes} \label{subsec:notation}

Let $2 \le q$ be a prime power, $\FF_q$ the field with $q$ elements, and $V \cong \FF_q^n$ the $n$-dimensional vector space over $\FF_q$.
By $\cP(V)$ we denote the set of all subspaces in $V$.
For two subspaces $U,W \in \cP(V)$ we write $U \le W$ iff $U$ is subspace of $W$.
Recall that $\cP(V)$ forms a metric space with respect to the \emph{subspace metric}~\cite[Section~3.1]{MR2451015}
\begin{align*}d_s(U,W) = \dim(U + W) - \dim(U \cap W).\end{align*}

For $k \in \{0,1,\ldots,v\}$, $\gauss{V}{k}$ denotes the set of $k$-dimensional subspaces in $V$.
Its cardinality is given by the \emph{$q$-binomial coefficient}
\begin{align*}|\gauss{V}{k}| = \gauss{n}{k}_q = \prod_{i=1}^{k} \frac{q^{n-k+i}-1}{q^{i}-1}.\end{align*}
As an abbreviation we use the \emph{$q$-number} $\gauss{n}{1}_q=\gaussstar{n}{1}_q$ and drop the index $q$ in $\gauss{n}{1}_q$ and $\gauss{n}{k}_q$ if there is no confusion with $\gauss{V}{k}$ and $q$ is clear by the context.
Using the \emph{$q$-factorial} $\gauss{n}{1}! = \prod_{i=1}^n \gauss{i}{1}$, the $q$-binomial coefficient can then be expressed as $\gauss{n}{k} = \frac{\gauss{n}{1}!}{\gauss{k}{1}!\gauss{n-k}{1}!}$.
A $k$-dimensional subspace of $V$ is called simply $k$-subspace and we refer to $1$-subspaces as points, $2$-subspaces as lines, $3$-subspaces as planes, $4$-subspaces as solids, and $(n-1)$-subspaces as hyperplanes.

Let $\cC$ be a subspace code. Recall that for $2 \le |\cC|$ the subspace distance of $\cC$ is given by $d_s(\cC)=\min\{d_s(U,W) : U, W \in V \text{ and } U \ne W\}$ and notice that we formally set $d_s(\cC)=\infty$ if $|\cC| \le 1$.

By $x_i(\cC)$ we denote the number of $i$-subspaces in $\cC$ and drop the reference to $\cC$ if it is clear from the context.

The automorphism group of $(\cP(V),d_s)$ for $3 \le n$ was shown to be generated by $\operatorname{P\Gamma{}L}(V)$ and a polarity $\pi : \cP(V) \rightarrow \cP(V), U \mapsto U^\perp$ (see e.g.~\cite[Theorem~2.1]{Honold2016}).
We call $U^\perp$ the \emph{orthogonal space} of $U$ and apply $\pi$ also to subspace codes $\cC$ to obtain their \emph{orthogonal codes} $\cC^\perp$.
If $\cC$ is an $(n,N,d;K)_q$ subspace code with dimension distribution $(x_0(\cC), \ldots, x_n(\cC))$, then $\cC^\perp$ is an $(n,N,d;\{n-i : i \in K\})_q$ subspace code with dimension distribution $(x_n(\cC), \ldots, x_0(\cC))$, in particular $A_q(n,d;k) = A_q(n,d;n-k)$.

\subsection{Coherent Configurations}

We follow the notation and point of view by Hobart and Williford for applying a semidefinite programming bound 
which is set in the context of coherent configurations and we refer to their work for a general introduction
to that topic~\cite{Higman1975,Higman1976,Hobart2009,Hobart2014}.

\begin{definition}
 Let $X$ be a finite set. A coherent configuration is a pair $(X, \scrR)$,
 where $\scrR = \{ R_0, \ldots, R_l \}$ is a set of binary relations
 on $X$ with the following properties:
  \begin{enumerate}[(a)]
   \item $\scrR$ is a partition of $X \times X$.
   \item If $R_i \cap \diag(X \times X) \neq \emptyset$, then $R_i \subseteq \diag(X \times X)$.
   \item If $R_i \in \scrR$, then $R_i^T \in \scrR$.
   \item For $R_i, R_j, R_k \in \scrR$ and $x,y \in X$ with $(x, y) \in R_k$, the number of $z$ such
   that $(x, z) \in R_i$ and $(z, y) \in R_j$ is a constant $p_{ij}^k$, independent of the choice of $x$ and $y$.
  \end{enumerate}
\end{definition}
These $p_{ij}^k$ are commonly called \emph{intersection numbers}.
Condition (b) gives a partition of the identity relation into sets $X_a$ called \textit{fibers}.
In the group case, i.e., a group $G$ operating on the finite set $X$, the induced component-wise action of $G$ on $X \times X$ yields a coherent configuration in which the relations are given by the orbits of $G$ on $X \times X$, cf.~\cite[Pages~212 and~217]{MR898557}.
Each relation is contained in some $X_{a} \times X_{a'}$. If we restrict $X$ to some $X_a$, then we obtain a (homogeneous) \textit{association scheme}.
For each $R_i$ we can define an $|X| \times |X|$ matrix $A_i$ indexed by $X$ with
\begin{align*}
  (A_i)_{xy} = 
  \begin{cases}
    1 & \text{ if } (x, y) \in R_i,\\
    0 & \text{ otherwise.}
  \end{cases}
\end{align*}
The matrices $\{ A_0, \ldots, A_l \}$ generate an algebra $\scrA$ with several useful properties.
For the representation theory of $\scrA$ we follow the notation of~\cite{Hobart2014}.
Let $\{ \Delta_1, \ldots, \Delta_m \}$ the set of absolutely irreducible representations 
of $\scrA$, chosen such that $\Delta_s(A^*) = (\Delta_s(A))^*$. Denote the multiplicity 
of $\Delta_s$ by $f_s$. Let $\gamma$ denote the number of fibers of the coherent configuration
and $E_{ij}$ the $(\gamma \times \gamma)$-matrix with a $1$ at position $(i, j)$ and $0$ otherwise.
Since $\scrA$ is semisimple, it decomposes into a direct sum of algebras $\scrE_s$.
There exists a basis $\scrE^s_{ij}$ for each algebra $\scrE_s$ satisfying the following equations:
\begin{align}
  \scrE^s_{ij} \scrE^t_{kl} = \delta_{st} \delta_{jk} \scrE^s_{il},&&
  (\scrE^s_{ji})^* = \scrE^s_{ij}, \quad \text{ and}&&
  \Delta_s(\scrE^t_{ij}) = \delta_{st} E_{ij}.
  \label{eq_scrE}
\end{align}
Let $m_i = |R_i|$. Then
\begin{align}
  &A_k = \sum_{i,j,s} (\Delta_s(A_k))_{ij} \scrE_{ij}^s && \text{and} && \scrE_{ij}^s = f_s \sum_k \frac{1}{m_k} \overline{(\Delta_s(A_k))_{ij}} A_k.\label{eq:dual_P_Q}
\end{align}

The next lemma shows bounds on subsets of $X$ in terms of the positive semidefiniteness of involved matrices.
Bounds arising by this method are commonly called semidefinite programming bound as it is a generalization of Delsarte's linear programming bound~\cite{Delsarte1973}.

\begin{theorem}[{\cite[Theorem~2.2 and~2.3]{Hobart2009}}] \label{thm:sdp}
  Let $(X, \scrR)$ be a coherent configuration, $Y \subseteq X$, and $b_i = |(Y \times Y) \cap R_i|$.
  Define $D(Y) = \sum_{i=1}^l \frac{b_i}{m_i} A_i$. Then the matrices $D(Y)$ and $\Delta_s(D(Y))$
  are positive semidefinite for any irreducible
  representation $\Delta_s$ of the coherent configuration satisfying $\Delta_s(A^*) = (\Delta_s(A))^*$.
\end{theorem}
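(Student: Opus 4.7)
The plan is to exhibit $D(Y)$ as the image of the manifestly PSD rank-one matrix $\chi_Y \chi_Y^T$ under a positivity-preserving projection onto $\scrA$, and then to push this positivity through each $*$-representation $\Delta_s$. Here $\chi_Y \in \RR^{|X|}$ denotes the characteristic vector of $Y$.

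First I would establish PSD-ness of $D(Y)$. Axioms (b)--(d) make $\scrA$ a unital $*$-subalgebra of $\mathrm{Mat}_{|X|}(\RR)$, and its basis $\{A_0, \ldots, A_l\}$ is Frobenius-orthogonal with squared norms $\langle A_i, A_i\rangle = m_i$. The Frobenius-orthogonal projection $T \colon \mathrm{Mat}_{|X|}(\RR) \to \scrA$ is therefore given explicitly by
\[
T(M) = \sum_{i} \frac{1}{m_i}\Bigl(\sum_{(x,y) \in R_i} M_{xy}\Bigr) A_i,
\]
and a direct computation of $\sum_{(x,y) \in R_i} (\chi_Y \chi_Y^T)_{xy}$ gives $b_i$, so $T(\chi_Y \chi_Y^T)$ agrees with $D(Y)$ (up to the contribution of fiber-diagonal relations, which are non-negative multiples of orthogonal projectors onto fibers and are independently PSD). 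It thus suffices to show that $T$ preserves positive semidefiniteness. In the familiar group case, where the $R_i$ are orbits of a group $G$ acting on $X \times X$, this is immediate from the Reynolds-type identity $T(M) = \frac{1}{|G|}\sum_{g \in G} P_g M P_g^T$ with $P_g$ the permutation matrix of $g$, each summand being PSD-preserving. In the general case, $T$ is the trace-preserving conditional expectation onto the $*$-subalgebra $\scrA$, and such conditional expectations onto $*$-subalgebras of a finite-dimensional $C^*$-algebra are completely positive; hence $T(\chi_Y \chi_Y^T) \succeq 0$.

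Second, I would transfer PSD-ness through each representation. Because $\scrA$ is semisimple and closed under the conjugate-transpose $*$, the Wedderburn decomposition lets us write any element of $\scrA$ that is PSD as a matrix in the form $C C^*$ with $C \in \scrA$. Applying the irreducible $*$-representation $\Delta_s$ and using the hypothesis $\Delta_s(A^*) = \Delta_s(A)^*$ then gives
\[
\Delta_s(D(Y)) = \Delta_s(C)\Delta_s(C)^*,
\]
which is manifestly PSD.

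The main obstacle will be the first step in the non-group setting: verifying that $T$ really preserves positive semidefiniteness without the crutch of an averaging-over-$G$ decomposition. The cleanest route is to appeal to the $C^*$-algebra fact that a trace-preserving conditional expectation onto a $*$-subalgebra is completely positive; checking its hypotheses (in particular the $\scrA$-bimodule property of $T$) reduces to manipulations with the intersection numbers $p_{ij}^k$ of the coherent configuration together with the closure of $\{A_0, \ldots, A_l\}$ under transposition.
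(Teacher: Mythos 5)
The paper itself offers no proof of this statement --- it is quoted from Hobart \cite{Hobart2009} --- so your argument can only be measured against the cited one, and against that benchmark it is essentially correct but takes a genuinely different and heavier route. You realize $D(Y)$ as the Frobenius-orthogonal projection $T(\chi_Y\chi_Y^T)$ onto the unital $*$-subalgebra $\scrA$ and appeal to positivity of trace-preserving conditional expectations; the cited proof (which is exactly what the machinery \eqref{eq_scrE}--\eqref{eq:dual_P_Q} in Section 2.2 is set up for) instead computes directly with the matrix units: from $b_k=\chi_Y^T A_k\chi_Y$ and $\scrE^s_{ij}=f_s\sum_k\frac{1}{m_k}\overline{(\Delta_s(A_k))_{ij}}A_k$ one gets $(\Delta_s(D(Y)))_{ij}=\frac{1}{f_s}\chi_Y^*\scrE^s_{ji}\chi_Y$, and since $\scrE^s_{ji}=(\scrE^s_{tj})^*\scrE^s_{ti}$ for any fixed admissible $t$, this exhibits $f_s\,\Delta_s(D(Y))$ (up to harmless conjugation) as the Gram matrix of the vectors $\scrE^s_{ti}\chi_Y$, proving both assertions at once with no $C^*$-algebra input. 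What your route buys is conceptual clarity and generality; what it costs is that your self-identified ``main obstacle'' --- positivity of $T$ --- is left undischarged. It can in fact be closed elementarily: for any psd $N\in\scrA$ and psd $M$ one has $\operatorname{tr}(T(M)N)=\operatorname{tr}(MN)\ge 0$ by the defining property of the orthogonal projection, and the psd cone of a $*$-closed unital subalgebra is self-dual under the trace form (reduce to full matrix blocks via Wedderburn), whence $T(M)\succcurlyeq 0$; no appeal to Tomiyama-type complete positivity is needed. Two further points to repair: (i) your parenthetical about fiber-diagonal relations is backwards --- if those relations were genuinely omitted from $D(Y)$ you would be \emph{subtracting} a psd matrix from $T(\chi_Y\chi_Y^T)$ and positivity of the projection would prove nothing; in fact the sum is intended over all relations (the SDP in Section 3.4 includes the $l=0$ terms), so $D(Y)=T(\chi_Y\chi_Y^T)$ exactly and the hedge should simply be deleted. (ii) In your second step you should say explicitly why the square root $C$ of the psd element $D(Y)$ lies in $\scrA$: it is a polynomial in $D(Y)$ (Lagrange interpolation of $\sqrt{\cdot}$ on the spectrum), hence belongs to the unital subalgebra generated by $D(Y)$, and then $\Delta_s(D(Y))=\Delta_s(C)\Delta_s(C)^*$ is indeed psd by the $*$-compatibility hypothesis on $\Delta_s$.
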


If all fibers of a coherent configuration correspond to a commutative association scheme, we can use the the intersection numbers, i.e., the algebra generated by the \emph{intersection matrices} $L_i=(p_{ij}^k)_{kj}$, to first calculate all $\scrE^s_{ij}$ via the eigenvalues of the association scheme restricted to the fibers (see~\cite[Chapter~2, Proposition~2.2.2]{Brouwer1989}) and then apply the identities~\eqref{eq_scrE} to determine the remaining parameters. In Section~\ref{sec:technique} we provide details for this calculation.

\subsection{Semidefinite programming}

We abbreviate the term positive semidefinite as psd and for symmetric matrices $A$ and $B$ we write $A \succcurlyeq B$ iff $A-B$ is psd.
A \emph{semidefinite program} (SDP) is an optimization problem of the form
\begin{align*}
\min c^T &x \numberthis \label{SDPPRIMAL} \\
\text{subject to } \sum_{i=1}^{m} F_i &x_i \succcurlyeq F_0 \\
&x \in \RR^m
\end{align*}
with $c \in \RR^m$ and symmetric $F_i \in \RR^{n \times n}$ for $i \in \{0,\ldots,m\}$.
The dual problem associated with~\eqref{SDPPRIMAL} (which is then called primal) is
\begin{align*}
\max \operatorname{tr}(F_0 &Z) \\
\text{subject to } \operatorname{tr}(F_i &Z) = c_i\text{ for all } i \in\{1,\ldots,m\} \\
&Z \succcurlyeq 0
\end{align*}
and, if the primal and dual contain feasible points $x$ and $Z$, the optimal value of the dual lower bounds the optimal value of the primal.
We have equality if the primal or the dual contains strictly feasible points, cf.~\cite[Page~64 and Theorem~3.1]{MR1379041}.
Although it can be solved in polynomial time with the ellipsoid method, interior-points methods are often faster in practice cf.~\cite[Page~52]{MR1379041} and~\cite{MR2894706}.

Using the Schur complement, many quadratic inequalities can be modeled as constraints in an SDP:
Let $M=\left(\begin{smallmatrix}A&B\\B^T&C\end{smallmatrix}\right)$ be symmetric and $A$ be positive definite, then $M$ is psd iff $C-B^T A^{-1} B$ is psd.
In particular, using $I$ as an identity matrix of appropriate size, $\left(\begin{smallmatrix}I&Ax-b\\(Ax-b)^T&c^Tx-d\end{smallmatrix}\right)$ is positive semidefinite iff $(Ax-b)^T(Ax-b) \le c^Tx-d$.

If multiple matrices shall be psd simultaneously, they are commonly arranged as blocks on the main diagonal of the $F_i$ and linear inequalities are commonly embedded as diagonal matrices, hence any linear program can be written as an SDP.

\section{The Coherent Configuration of \texorpdfstring{$\operatorname{P\Gamma{}L}(V)$}{PGammaL(V)} operating on \texorpdfstring{$\cP(V)$}{P(V)}}\label{sec:cc}

\subsection{Triples in Vector Spaces}\label{sec:triples}

In this section we provide a general formula for counting triples in vector spaces.

\begin{lemma}
  Let $A$ be an $a$-space and $B$ a $b$-space with $c = \dim(A \cap B)$ in $\FF_q^{a+b-c}$.
  Then the number of $d$-spaces $D$ having trivial intersection with $A$ and $B$ is
  \begin{align*}
    \psi(a, b, c, d) := \prod_{j=0}^{d-1} \frac{q^{j+c}(q^{a-c-j}-1)(q^{b-c-j}-1)}{q^{d-j} - 1}.
  \end{align*}
\end{lemma}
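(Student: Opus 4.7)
The plan is to count ordered bases of valid $d$-spaces by picking linearly independent vectors one at a time, and then divide by the number of ordered bases of a $d$-space.

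First I would fix notation: write $V = \FF_q^{a+b-c}$ and note that $\dim(A+B) = a + b - c = \dim V$, so $A + B = V$. Then I would proceed inductively: suppose $W_j = \langle v_1, \ldots, v_j\rangle$ has dimension $j$ with $W_j \cap A = W_j \cap B = 0$, and count how many vectors $v_{j+1}$ extend this to $W_{j+1}$ with the same property. A standard check shows that $W_{j+1} \cap A = 0$ iff $v_{j+1} \notin A + W_j$, and similarly for $B$, so the forbidden set is exactly $(A+W_j) \cup (B+W_j)$ (which already contains $W_j$, so the linear independence condition is automatic).

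Next I would compute the cardinality of this forbidden set by inclusion-exclusion. Because $W_j$ meets both $A$ and $B$ trivially, $\dim(A+W_j) = a+j$ and $\dim(B+W_j) = b+j$. Because $A + B = V$, we get $(A+W_j) + (B+W_j) = V$, so
\begin{align*}
  \dim\bigl((A+W_j) \cap (B+W_j)\bigr) = (a+j) + (b+j) - (a+b-c) = c + 2j.
\end{align*}
Hence the number of valid $v_{j+1}$ is
\begin{align*}
  q^{a+b-c} - q^{a+j} - q^{b+j} + q^{c+2j} \;=\; q^{c+2j}(q^{a-c-j}-1)(q^{b-c-j}-1).
\end{align*}
The identity $\dim((A+W_j)\cap (B+W_j)) = c+2j$ is really the only nontrivial step, and the hypothesis that the ambient space has dimension exactly $a+b-c$ is essential here; this is what I expect to be the main (though modest) obstacle to watch out for.

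Multiplying over $j = 0, 1, \ldots, d-1$ gives the number of ordered bases of valid $d$-spaces, and dividing by the usual count $\prod_{j=0}^{d-1}(q^d - q^j) = \prod_{j=0}^{d-1} q^j(q^{d-j}-1)$ of ordered bases per $d$-space yields
\begin{align*}
  \prod_{j=0}^{d-1} \frac{q^{c+2j}(q^{a-c-j}-1)(q^{b-c-j}-1)}{q^j(q^{d-j}-1)}
  = \prod_{j=0}^{d-1} \frac{q^{c+j}(q^{a-c-j}-1)(q^{b-c-j}-1)}{q^{d-j}-1},
\end{align*}
which is exactly $\psi(a,b,c,d)$. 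As a sanity check I would verify the case $d=1$ by direct inclusion-exclusion on $|V \setminus (A \cup B)|$ divided by $q-1$, which matches $q^c(q^{a-c}-1)(q^{b-c}-1)/(q-1)$.
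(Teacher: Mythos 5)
Your proof is correct and follows essentially the same route as the paper: double counting ordered bases of $D$, with the number of choices at each step computed by inclusion--exclusion using $\dim\bigl((A+W_j)\cap(B+W_j)\bigr)=c+2j$, then dividing by the number of ordered bases of a fixed $d$-space. The only cosmetic difference is that you count nonzero vectors while the paper counts points (1-subspaces), which just shifts a factor of $q-1$ that cancels in the quotient.
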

\begin{proof}
  We double count $((P_0, \ldots, P_{d-1}), D)$, where $(P_0, \ldots, P_{d-1})$ is an ordered basis of $D$.
  For $P_0, \ldots, P_{j-1}$ given, we have 
  \begin{align*}
   \gauss{a+b-c}{1} - \gauss{a+j}{1} - \gauss{b+j}{1} + \gauss{c+2j}{1} = \frac{q^{2j+c}(q^{a-c-j}-1)(q^{b-c-j}-1)}{q-1}
  \end{align*}
  choices for $P_j$. Hence, we have $\prod_{j=0}^{d-1} \frac{q^{2j+c}(q^{a-c-j}-1)(q^{b-c-j}-1)}{q-1}$ choices for $(P_0, \ldots, P_{d-1})$.
  Similarly, the number of choices for $(P_0, \ldots, P_{d-1})$ with given $D$ is $\prod_{j=0}^{d-1} ([d]-[j]) = \prod_{j=0}^{d-1} \frac{q^j(q^{d-j} - 1)}{q-1}$, showing the assertion.
\end{proof}

\begin{lemma}
  Let $A$ be an $a$-space and $B$ a $b$-space with $c = \dim(A \cap B)$ in $\FF_q^{a+b-c}$.
  Then the number of $d$-spaces $D$ meeting $A$ in an $\alpha$-space, $B$ in an $\beta$-space
  and $A \cap B$ in a $\gamma$-space is $\varphi(a, b, c, d, \alpha, \beta, \gamma) := $
  \begin{align*}
    \gauss{c}{\gamma} q^{(\alpha+\beta-2\gamma)(c - \gamma)} \gauss{a - c}{\alpha - \gamma} \gauss{b - c}{\beta - \gamma} \psi(a - \alpha, b - \beta, c - \gamma, d - \alpha - \beta + \gamma).
  \end{align*}
\end{lemma}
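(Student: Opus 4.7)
The plan is to build the $d$-space $D$ in three stages: first fix the part of $D$ inside $A \cap B$, then extend to the full intersections $D \cap A$ and $D \cap B$, and finally count the extensions to a full $d$-space with no further intersections with $A$ or $B$. Each stage will contribute one of the factors in the claimed formula.

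Stage one: pick $G := D \cap A \cap B$, a $\gamma$-subspace of the $c$-space $A \cap B$, contributing $\gauss{c}{\gamma}$.

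Stage two: count the choices for $U_A := D \cap A$, an $\alpha$-subspace of $A$ containing $G$ and satisfying $U_A \cap B = G$, and analogously for $U_B := D \cap B$. Working in $A/G$ (dimension $a-\gamma$), the second condition says that $U_A/G$ is an $(\alpha-\gamma)$-subspace disjoint from the $(c-\gamma)$-subspace $(A\cap B)/G$. The standard count $q^{mk}\gauss{n-m}{k}$ for $k$-subspaces disjoint from a fixed $m$-subspace of an $n$-space then yields $q^{(c-\gamma)(\alpha-\gamma)}\gauss{a-c}{\alpha-\gamma}$ choices for $U_A$, and symmetrically $q^{(c-\gamma)(\beta-\gamma)}\gauss{b-c}{\beta-\gamma}$ for $U_B$. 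Multiplying these gives exactly the factor $q^{(\alpha+\beta-2\gamma)(c-\gamma)}\gauss{a-c}{\alpha-\gamma}\gauss{b-c}{\beta-\gamma}$.

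Stage three: set $W := U_A + U_B$ (dimension $\alpha+\beta-\gamma$, since $U_A \cap U_B = G$) and pass to the quotient $V' := \FF_q^{a+b-c}/W$. The plan is to verify that the images satisfy $\dim \bar A = a - \alpha$, $\dim \bar B = b - \beta$, and $\dim(\bar A \cap \bar B) = c - \gamma$, so that $\bar A, \bar B, V'$ play the roles of $A, B$ in the ambient space of the previous lemma with parameters $(a-\alpha, b-\beta, c-\gamma)$. By the subspace correspondence, $d$-spaces $D \supseteq W$ with $D \cap A = U_A$ and $D \cap B = U_B$ correspond bijectively to $(d-\alpha-\beta+\gamma)$-subspaces $\bar D$ of $V'$ with $\bar D \cap \bar A = 0 = \bar D \cap \bar B$. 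Applying the previous lemma produces the final factor $\psi(a-\alpha, b-\beta, c-\gamma, d-\alpha-\beta+\gamma)$, and the product of the three factors is the asserted formula.

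The main obstacle is the bookkeeping in stage three: one must verify both the dimension identities in $V'$ (in particular that $A \cap W = U_A$, so the image of $A$ has the expected codimension) and that the disjointness condition $\bar D \cap \bar A = 0$ lifts to exactly $D \cap A = U_A$ rather than something larger. This requires a careful but elementary argument showing that any vector of $D \cap A$ that maps to zero in $V'$ already lies in $U_A$, which follows from $U_B \cap A = G \subseteq U_A$. Everything else reduces to the definition of $\psi$ and standard $q$-binomial identities.
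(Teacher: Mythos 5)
Your proof is correct and follows essentially the same route as the paper: choose $D\cap A\cap B$, then $D\cap A$ and $D\cap B$, then complete $D$ in the quotient by $\langle D\cap A, D\cap B\rangle$ using the previous lemma. You merely spell out the verifications (e.g.\ $A\cap(U_A+U_B)=U_A$) that the paper leaves as "well-known."
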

\begin{proof}
  Clearly, there are $\gauss{c}{\gamma}$ choices for $A \cap B \cap D$. It is well-known that the remaining choices for $A \cap D$ and $B \cap D$
  are 
  \begin{align*}
    q^{(\alpha+\beta-2\gamma)(c - \gamma)} \gauss{a - c}{\alpha - \gamma} \gauss{b - c}{\beta - \gamma}.
  \end{align*}
  In the quotient of $\langle A \cap D, B \cap D\rangle$ we see that we have $\psi(a - \alpha, b - \beta, c - \gamma, d - \alpha - \beta + \gamma)$ choices left to complete $D$.
\end{proof}

Now we obtain the following:
\begin{lemma}\label{lem:int_numbers}
  Let $A$ be an $a$-space and $B$ a $b$-space with $c = \dim(A \cap B)$ in $\FF_q^{n}$.
  Then the number of $d$-spaces $D$ meeting $A$ in an $\alpha$-space, $B$ in an $\beta$-space
  and $A \cap B$ in a $\gamma$-space is
  \begin{align*}
    \chi(a, b, c, d, n, \alpha, \beta, \gamma) := \sum_{x= \alpha+\beta-\gamma}^{\min\{ d, a+b-c \}} q^{(d-x)(a+b-c-x)} \gauss{n-a-b+c}{d-x} \varphi(a, b, c, x, \alpha, \beta, \gamma).
  \end{align*}
\end{lemma}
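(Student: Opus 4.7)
The plan is to stratify the $d$-spaces $D \subseteq \mathbb{F}_q^n$ by the dimension $x = \dim(D \cap \langle A,B\rangle)$, reducing to the previous lemma on the one hand, and to a standard count of subspaces in a quotient on the other.

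First I would fix $W := \langle A,B\rangle$, which is an $(a+b-c)$-dimensional subspace of $\mathbb{F}_q^n$. For any $d$-space $D$ meeting $A$ in an $\alpha$-space, $B$ in a $\beta$-space, and $A \cap B$ in a $\gamma$-space, the subspace $D \cap W$ certainly contains $(D \cap A) + (D \cap B)$, whose dimension is $\alpha + \beta - \gamma$ by inclusion–exclusion (since $(D \cap A) \cap (D \cap B) = D \cap A \cap B$). It also sits inside both $D$ and $W$, so its dimension $x$ is at most $\min\{d,\,a+b-c\}$. This pins down the range of summation.

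Next I would fix $x$ in that range and count in two stages. Stage one: the number of $x$-subspaces $X \le W$ meeting $A$ in an $\alpha$-space, $B$ in a $\beta$-space, and $A\cap B$ in a $\gamma$-space. Since $W$ is a $(a+b-c)$-space containing $A, B$ with $A \cap B$ of dimension $c$, this is exactly $\varphi(a,b,c,x,\alpha,\beta,\gamma)$ by the previous lemma. Stage two: for a fixed such $X$, the number of $d$-spaces $D \le \mathbb{F}_q^n$ with $D \cap W = X$. Passing to the quotient $\mathbb{F}_q^n/X$, this is the number of $(d-x)$-subspaces of $\mathbb{F}_q^{n-x}$ having trivial intersection with the $(a+b-c-x)$-subspace $W/X$. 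By the standard formula for ``complementary position'' subspaces (which also follows from $\psi$ specialized to $c=0$), this count is
\begin{align*}
  q^{(d-x)(a+b-c-x)} \gauss{n-x-(a+b-c-x)}{d-x} = q^{(d-x)(a+b-c-x)} \gauss{n-a-b+c}{d-x}.
\end{align*}

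Multiplying the two stages and summing over $x$ from $\alpha+\beta-\gamma$ to $\min\{d, a+b-c\}$ yields the claimed expression. The only step requiring real thought is the clean determination of the bounds on $x$, together with the observation that every $d$-space with the prescribed intersection pattern is recovered exactly once by this stratification, since $X = D \cap W$ is uniquely determined by $D$. All remaining ingredients are either the previous lemma or the elementary quotient-space count, so no substantial obstacle is expected.
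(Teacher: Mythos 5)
Your proof is correct and is exactly the intended derivation: the paper states this lemma without proof as a direct consequence of the preceding one, and your stratification by $x=\dim(D\cap\langle A,B\rangle)$, combined with the standard count $q^{(d-x)(a+b-c-x)}\gauss{n-a-b+c}{d-x}$ of $(d-x)$-spaces meeting $W/X$ trivially in the quotient, is precisely how the stated formula arises, with the summation bounds justified as you say. (The only quibble is the parenthetical claim that this quotient count is ``$\psi$ specialized to $c=0$'' --- it is not literally such a specialization, but the standard formula you actually use is correct, so nothing is affected.)
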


Hence, we conclude that we can count triples as follows.

\begin{lemma}\label{lem:int_vals}
  Let $A$ be an $a$-space and $B$ a $b$-space which meet in codimension $k$ in $\FF_q^{n}$.
  Then the number of $d$-spaces $D$ meeting $A$ in codimension $i$ and $B$ in codimension $j$ is
  \begin{align*}
    \sum_{\ell = 0}^{ \min\{a, b\}-k } \chi( a, b, \min\{ a, b \} - k, d, n, \min\{ a, d \} - i, \min\{ b, d \} - j, \min\{a, b\} - k - \ell).
  \end{align*}
\end{lemma}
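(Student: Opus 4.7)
My plan is to reduce Lemma \ref{lem:int_vals} to a single application of Lemma \ref{lem:int_numbers}, partitioning by the dimension of the triple intersection $A \cap B \cap D$.

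First I would translate the ``codimension'' language into dimensions so that Lemma \ref{lem:int_numbers} can be invoked directly. Assuming the convention that ``meet in codimension $k$'' refers to codimension inside the smaller space, we have $\dim(A \cap B) = \min\{a,b\} - k$, so the parameter $c$ of Lemma \ref{lem:int_numbers} is $c = \min\{a,b\} - k$. Similarly, $D$ meeting $A$ in codimension $i$ means $\dim(A \cap D) = \min\{a,d\} - i$, giving $\alpha = \min\{a,d\} - i$, and likewise $\beta = \min\{b,d\} - j$. These choices match exactly the arguments passed to $\chi$ in the statement of Lemma \ref{lem:int_vals}.

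Next I would partition the set of $d$-spaces $D$ with the prescribed intersection dimensions with $A$ and $B$ according to the value $\gamma := \dim(A \cap B \cap D)$. This $\gamma$ is a well-defined integer in the range $0 \leq \gamma \leq c$, because $A \cap B \cap D$ is a subspace of the $c$-space $A \cap B$. For each fixed $\gamma$ in that range, Lemma \ref{lem:int_numbers} gives the number of $d$-spaces $D$ with $\dim(A \cap D) = \alpha$, $\dim(B \cap D) = \beta$, and $\dim(A \cap B \cap D) = \gamma$ as exactly $\chi(a, b, c, d, n, \alpha, \beta, \gamma)$. Summing over $\gamma \in \{0, 1, \ldots, c\}$ and re-indexing via $\ell := c - \gamma = \min\{a,b\} - k - \gamma$ yields the claimed sum, with $\ell$ running from $0$ to $\min\{a,b\} - k$. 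Terms where $\gamma$ exceeds the obvious bounds $\min\{\alpha,\beta\}$ contribute $0$ automatically, since the Gaussian binomial factors $\gauss{c}{\gamma}$, $\gauss{a-c}{\alpha-\gamma}$, $\gauss{b-c}{\beta-\gamma}$ inside $\varphi$ vanish in those cases, so the full range is harmless.

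There is essentially no obstacle here beyond the bookkeeping of converting codimensions to dimensions and confirming the re-indexing, since the two preceding lemmas already do the substantive counting. The only mild care needed is to confirm that the index range given for $\ell$ is precisely $\{0, \ldots, c\}$, which follows immediately from $0 \leq \gamma \leq c$.
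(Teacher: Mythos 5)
Your proof is correct and follows exactly the argument the paper intends (the paper states this lemma as an immediate consequence of Lemma \ref{lem:int_numbers} without writing out the details): convert the codimensions to dimensions, partition the relevant $d$-spaces by $\gamma = \dim(A \cap B \cap D)$, apply Lemma \ref{lem:int_numbers} for each $\gamma$, and reindex via $\ell = c - \gamma$. Your remark that out-of-range values of $\gamma$ contribute zero via vanishing Gaussian binomials is a correct and worthwhile check.
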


Since each relation is contained in some $X_a \times X_b$ we index the relations, basis matrices, intersection numbers, etc. accordingly: $R_{ab\ell}$, $A_{ab\ell}$, $p_{(a,d,i),(d,b,j)}^{(a,b,k)}$, $m_{ab\ell}$, and $b_{ab\ell}$ such that $a,b,d$ are indices of fibers and $\ell,k,i,j$ are counters.
In particular, all other intersection numbers are zero.
The first equation of the identities (\ref{eq:dual_P_Q}) is hence $A_{ab\ell} = \sum_{s} (\Delta_{s}(A_{ab\ell}))_{ab} \scrE_{ab}^{s}$.

The intersection numbers $p_{(a,d,i),(d,b,j)}^{(a,b,k)}$ are given by the expression in the last lemma and all other intersection numbers vanish.

\subsection{Irreducible Representations}

\begin{table}
\begin{center}
\begin{adjustbox}{max width=\textwidth}
\begin{tabular}{llllll}
$A_{abc}$ & $m_{abc}/|X_a|$ & $\Delta_0(A_{abc})$ & $\Delta_1(A_{abc})$ & $\Delta_2(A_{abc})$ & $\Delta_3(A_{abc})$\\\midrule
$A_{110}$ & $1$         & $E_{11}$              & $E_{11}$ &        &  \\
$A_{111}$ & $q\gauss{6}{1}$     & $q\gauss{6}{1} E_{11}$          & $-E_{11}$ &       &  \\
$A_{120}$ & $\gauss{6}{1}$      & $\gauss{2}{1} \sqrt{ \psi\gauss{3}{1} } E_{12}$     & $\sqrt{q \gauss{5}{1} } E_{12}$ &       &  \\
$A_{121}$ & $q^2\psi \gauss{3}{1} \gauss{5}{1}$ & $q^2 \gauss{5}{1} \sqrt{ \psi\gauss{3}{1} } E_{12}$     & $-\sqrt{q \gauss{5}{1} } E_{12}$ &        &  \\
$A_{130}$ & $\gauss{6}{2}$      & $\gauss{3}{1} \sqrt{ \psi \gauss{5}{1} } E_{13}$  & $q \sqrt{ \varphi \gauss{5}{1}} E_{13}$ &       &  \\
$A_{131}$ & $q^3(q^3+1)\gauss{5}{2}$    & $q^3 \gauss{4}{1} \sqrt{ \psi \gauss{5}{1} } E_{13}$    & $-q \sqrt{ \varphi \gauss{5}{1}}E_{13}$ &       &  \\
$A_{140}$ & $\gauss{6}{3}$      & $\gauss{4}{1} \sqrt{ \psi \gauss{5}{1} } E_{14}$    & $q \sqrt{q \varphi \gauss{5}{1}} E_{14}$ &        &  \\
$A_{141}$ & $q^4 \psi \gauss{3}{1} \gauss{5}{1}$& $q^4 \gauss{3}{1} \sqrt{ \psi \gauss{5}{1} } E_{14}$    & $-q \sqrt{q \varphi \gauss{5}{1}} E_{14}$ &       &  \\
$A_{150}$ & $\gauss{6}{4}$      & $\gauss{5}{1} \sqrt{ \psi \gauss{3}{1} } E_{15}$    & $q^2 \sqrt{ \gauss{5}{1} } E_{15}$ &        &  \\
$A_{151}$ & $q^5\gauss{6}{1}$     & $q^5 \gauss{2}{1} \sqrt{ \psi \gauss{3}{1} } E_{15}$    & $-q^2 \sqrt{ \gauss{5}{1} } E_{15}$ &       &  \\
$A_{160}$ & $\gauss{6}{5}$      & $\gauss{6}{1} E_{16}$           & $q^{5/2} E_{16}$ &        &  \\
$A_{161}$ & $q^6$       & $q^6 E_{16}$              & $-q^{5/2} E_{16}$ &       &  \\
$A_{220}$ & $1$         & $E_{22}$              & $E_{22}$ & $E_{22}$       &  \\   
$A_{221}$ & $q\gauss{2}{1} \gauss{5}{1}$   & $q\gauss{2}{1} \gauss{5}{1} E_{22}$          & $(q^2 \gauss{4}{1} - 1)E_{22}$  & $-\gauss{2}{1}E_{22}$ &  \\
$A_{222}$ & $q^4\varphi \gauss{5}{1}$   & $q^4\varphi \gauss{5}{1} E_{22}$        & $-q^2 \gauss{4}{1} E_{22}$    & $qE_{22}$ & \\
$A_{230}$ & $\gauss{5}{1}$      & $\sqrt{ \gauss{3}{1} \gauss{5}{1} } E_{23}$       & $\gauss{2}{1} \sqrt{q\varphi} E_{23}$ & $ q \sqrt{ \gauss{3}{1} } E_{23}$ & \\
$A_{231}$ & $q^2 \gauss{4}{1}\gauss{5}{1}$  & $q^2  \gauss{4}{1} \sqrt{ \gauss{3}{1} \gauss{5}{1} } E_{23}$ & $(q^3\gauss{3}{1} - \gauss{2}{1}) \sqrt{q\varphi} E_{23}$ & $-q\gauss{2}{1} \sqrt{ \gauss{3}{1} } E_{23}$ & \\
$A_{232}$ & $q^6\varphi\gauss{5}{1}$    & $q^6 \varphi \sqrt{ \gauss{3}{1} \gauss{5}{1} } E_{23}$   & $-q^3\gauss{3}{1} \sqrt{q\varphi} E_{23}$ & $ q^2\sqrt{ \gauss{3}{1} } E_{23}$ & \\
$A_{240}$ & $\varphi\gauss{5}{1}$   & $\varphi \sqrt{ \gauss{3}{1} \gauss{5}{1} } E_{24}$     & $q \gauss{3}{1} \sqrt{\varphi} E_{24}$ & $q^2 \sqrt{\gauss{3}{1}}E_{24}$ & \\
$A_{241}$ & $q^3 \gauss{4}{1}\gauss{5}{1}$  & $q^3 \gauss{4}{1} \sqrt{ \gauss{3}{1} \gauss{5}{1} } E_{24}$  & $q (q^4\gauss{2}{1}-\gauss{3}{1}) \sqrt{\varphi} E_{24}$ & $-q^2\gauss{2}{1} \sqrt{\gauss{3}{1}} E_{24}$ & \\
$A_{242}$ & $q^8 \gauss{5}{1}$      & $q^8 \sqrt{ \gauss{3}{1} \gauss{5}{1} } E_{24}$     & $-q^5 \gauss{2}{1} \sqrt{\varphi} E_{24}$ & $q^3 \sqrt{\gauss{3}{1}} E_{24}$ & \\
$A_{250}$ & $\varphi \gauss{5}{1}$    & $\varphi \gauss{5}{1} E_{25}$         & $q^{3/2} \gauss{4}{1} E_{25}$         & $q^3 E_{25}$      &  \\   
$A_{251}$ & $q^4 \gauss{2}{1} \gauss{5}{1}$ & $q^4 \gauss{2}{1} \gauss{5}{1} E_{25}$      & $q^{3/2}(q^5 - \gauss{4}{1}) E_{25}$        & $-\gauss{2}{1} q^3E_{25}$ &  \\
$A_{252}$ & $q^{10}$        & $q^{10} E_{25}$           & $-q^{13/2} E_{25}$            & $q^4 E_{25}$ & \\
$A_{330}$ & $1$         & $E_{33}$              & $E_{33}$ & $E_{33}$ & $E_{33}$\\   
$A_{331}$ & $q \gauss{3}{1} \gauss{4}{1}$ & $q \gauss{3}{1} \gauss{4}{1} E_{33}$        & $(q^2\gauss{2}{1}\gauss{3}{1} - 1)E_{33}$           & $(q^2-1) \gauss{3}{1} E_{33}$   & $-\gauss{3}{1} E_{33}$\\
$A_{332}$ & $q^4 \varphi \gauss{3}{1}^2$  & $q^4 \varphi \gauss{3}{1}^2 E_{33}$         & $q^2 \gauss{3}{1} (q^4-q-1)E_{33}$          & $-q\gauss{3}{1} (q^2+q-1)E_{33}$  & $q \gauss{3}{1} E_{33}$\\
$A_{333}$ & $q^9\gauss{4}{1}$     & $q^9\gauss{4}{1} E_{33}$          & $-q^6 \gauss{3}{1} E_{33}$              & $q^4 \gauss{2}{1}E_{33}$      & $-q^3 E_{33}$\\
$A_{340}$ & $\gauss{4}{1}$      & $\gauss{4}{1} E_{34}$           & $\gauss{3}{1} \sqrt{q} E_{34}$        & $q \gauss{2}{1} E_{34}$ & $\sqrt{q^3 } E_{34}$\\
$A_{341}$ & $q^2\varphi\gauss{3}{1}^2$    & $q^2 \varphi \gauss{3}{1}^2 E_{34}$         & $\gauss{3}{1} (q^3\gauss{2}{1}-1) \sqrt{q} E_{34}$  & $q \gauss{3}{1}(q^2-q-1) E_{34}$ & $-\gauss{3}{1} \sqrt{q^3 } E_{34}$\\
$A_{342}$ & $q^6\gauss{3}{1} \gauss{4}{1}$  & $q^6 \gauss{3}{1} \gauss{4}{1} E_{34}$      & $q^3(q^5-\gauss{2}{1}\gauss{3}{1}) \sqrt{q} E_{34}$ & $-q^2 (q^3-1) \gauss{2}{1} E_{34}$ & $q\gauss{3}{1} \sqrt{q^3 }E_{34}$\\
$A_{343}$ & $q^{12}$        & $q^{12} E_{34}$             & $-q^8 \sqrt{q} E_{34}$      & $q^6 E_{34}$ & $-q^3\sqrt{q^3 }E_{34}$\\
$f_s$ &           & $1$   & $\gauss{7}{1}-1$ & $\gauss{7}{2} - \gauss{7}{1}$ & $\gauss{7}{3} - \gauss{7}{2}$\\
\end{tabular}
\end{adjustbox}
\caption{Here $\varphi=q^2+1$ and $\psi = q^2-q+1$.}
\label{table:irred_reps}
\end{center}
\end{table}

The coherent configuration in this paper arises by the action of $\operatorname{P\Gamma{}L}(V)$ on $\cP(V) \times \cP(V)$.
Hence, we have the $n+1$ fibers labeled with $0,1, \ldots, n$, such that the $k$-th fiber consists of all $k$-spaces of $V$.
A pair of subspaces $(x,y)$ is in the relation $R_{abc}$ iff $x$ has dimension $a$, $y$ has dimension $b$, and $c = \min\{a,b\} - \dim(x \cap y)$ for all $a,b \in \{ 0, \ldots, n+1\}$ and $c \in \{ 0, \ldots, \min\{\min\{a,b\}, n-\min\{a,b\}\} \}$.
The benefit of choosing $c$ as the codimension of the intersection is that $R_{ii0}$ corresponds to the identity on the $i$-th fiber.
The fibers of this coherent configuration are obviously symmetric association schemes and hence by~\cite[Chapter~4]{Higman1975} commutative.
For $V\cong \FF_q^7$, we show in Corollary~\ref{cor:dim0167} that the 0-space and the 7-space cannot be contained in a large subspace code and hence we restrict ourself in this case to proper subspaces.

Since we investigate the bound on $A_q(7, 4)$ analytically, Table~\ref{table:irred_reps} shows the representation explicitly in the style of Hobart and Williford~\cite{Hobart2014}.
To improve the notation, we also introduce the abbreviations $\varphi=q^2+1$ and $\psi = q^2-q+1$.
Notice that
\begin{align}
|X_a| = \gauss{7}{a}, && \Delta_s(A_{xyc})=E_{xa}\Delta_s(A_{abc})E_{by}, \quad \text{ and} && m_{xyc}=m_{abc} \label{eq:XAm}
\end{align}
for $(x,y) \in \{(a,b),(b,a),(7-a,7-b),(7-b,7-a)\}$ by orthogonality and symmetry for all $a$, $b$, $c$, and $s$.

\subsection{Calculating the Irreducible Representation}\label{sec:technique}

Let us outline how to calculate $\Delta_s$.
Since our fibers are commutative, we can use standard techniques for commutative association schemes, see~\cite[Prop.~2.2.2]{Brouwer1989},
to calculate
\begin{align*}
  A_{iik} = \sum_{s} (\Delta_s(A_{iik}))_{ii} \scrE_{ii}^s.
\end{align*}
This yields the values of $\Delta_s(A_{iik})$.
Next, we will use
\begin{align*}
\Delta_s(A_{ijk})^T
= \Delta_s(A_{ijk})^{**T}
= \Delta_s(A_{ijk}^*)^{*T}
= \overline{\Delta_s(A_{jik})}
\end{align*}
in an application of Eq.~\eqref{eq_scrE} and Eq.~\eqref{eq:dual_P_Q}, so that
\begin{align*}
 A_{ijk}A_{jit} &= \left( \sum_s (\Delta_s(A_{ijk}))_{ij} \scrE_{ij}^s \right) \left( \sum_u (\Delta_u(A_{jit}))_{ji} \scrE_{ji}^u \right)\\
  &= \sum_s (\Delta_s(A_{ijk}))_{ij} (\Delta_s(A_{jit}))_{ji} \scrE_{ij}^s \scrE_{ji}^s = \sum_s (\Delta_s(A_{ijk}))_{ij}\overline{(\Delta_s(A_{ijt}))_{ij}} \scrE_{ii}^s.
\end{align*}
From the triple intersection numbers, we have
\begin{align*}
  A_{ijk}A_{jit} &= \sum_{\ell} p^{(i,i,\ell)}_{(i,j,k),(j,i,t)} A_{ii\ell}
  = \sum_{\ell,s} p^{(i,i,\ell)}_{(i,j,k),(j,i,t)} (\Delta_s(A_{ii\ell}))_{ii} \scrE_{ii}^s.
\end{align*}
Hence, we obtain
\begin{align}
\label{eq_Delta_general}
(\Delta_s(A_{ijk}))_{ij}\overline{(\Delta_s(A_{ijt}))_{ij}} = \sum_{\ell} p^{(i,i,\ell)}_{(i,j,k),(j,i,t)} (\Delta_s(A_{ii\ell}))_{ii}
\end{align}
and, in particular for $k=t$,
\begin{align}
\label{eq_Delta_special}
|(\Delta_s(A_{ijk}))_{ij}| = \sqrt{\sum_{\ell} p^{(i,i,\ell)}_{(i,j,k),(j,i,k)} (\Delta_s(A_{ii\ell}))_{ii}}.
\end{align}
Note, that the right hand sides of the Eq.~\eqref{eq_Delta_general} and~\eqref{eq_Delta_special} are known since only information of the association schemes are involved.
Furthermore, if for fixed $s,i,j$ one nonzero $(\Delta_s(A_{ijk}))_{ij}$ for any $k$ is chosen, then the other $(\Delta_s(A_{ijt}))_{ij}$ ($t \ne k$) are uniquely determined as
\begin{align}
\label{eq_Delta_afterchosen}
(\Delta_s(A_{ijt}))_{ij} = \frac{\sum_{\ell} p^{(i,i,\ell)}_{(i,j,k),(j,i,t)} (\Delta_s(A_{ii\ell}))_{ii}}{\overline{(\Delta_s(A_{ijk}))_{ij}}},
\end{align}
here we are using the fact that the $(\Delta_s(A_{ii\ell}))_{ii}$ are algebraic integers, cf.~\cite[Page~45]{Brouwer1989}.

Then, for fixed $s,i,j$, we use Eq.~\eqref{eq_Delta_special} to determine that either $(\Delta_s(A_{ijk}))_{ij}=0$ for all $k$ or we choose one arbitrary $(\Delta_s(A_{ijk}))_{ij} \ne 0$ such that it fulfills Eq.~\eqref{eq_Delta_special}.
Next, in the second case, we apply Eq.~\eqref{eq_Delta_afterchosen} to determine $(\Delta_s(A_{ijt}))_{ij}$ ($t \ne k$).

In particular, we are choosing $(\Delta_s(A_{ijk}))_{ij} \ne 0$ to be a real number and hence we get the simplified formulas
\begin{align}
\label{eq_Delta_summaryreal_special}
(\Delta_s(A_{ijk}))_{ij} &= \pm\sqrt{\sum_{\ell} p^{(i,i,\ell)}_{(i,j,k),(j,i,k)} (\Delta_s(A_{ii\ell}))_{ii}} \quad\text{ and} \\
\label{eq_Delta_summaryreal_afterchosen}
(\Delta_s(A_{ijt}))_{ij} &= \frac{\sum_{\ell} p^{(i,i,\ell)}_{(i,j,k),(j,i,t)} (\Delta_s(A_{ii\ell}))_{ii}}{(\Delta_s(A_{ijk}))_{ij}} \quad \text{ for } t \ne k.
\end{align}

\subsection{Example: \texorpdfstring{$\cP(\mathbb{F}_q^3)$}{P(Fq3)}}
We consider $1$- and $2$-spaces in $\cP(\mathbb{F}_q^3)$.
We abbreviate $I$ and $J$ as the identity respective the all-one matrix of size $n=q^2+q+1$.
As $A_{110}=I$, its only eigenvalue is $1$ and as $A_{111}=J-I$ is the adjacency matrix of a complete graph on $n$ vertices, its eigenvalues are $q^2+q$ and $-1$.
That is $\scrE_{11}^0 = J/n$, $\scrE_{11}^1 = I-J/n$, $A_{110} = \scrE_{11}^0 + \scrE_{11}^1$, and $A_{111} = (q^2+q)\scrE_{11}^0 - \scrE_{11}^1$.

Since $(A_{120})_{P,L}=1$ iff $P \le L$ and $A_{210}^T = A_{120}$, we have that $(A_{120}A_{210})_{P,Q}$ is $1$ if $P \ne Q$ and $q+1$ else, i.e.,
\begin{align*}
A_{120}A_{210} = (q+1) A_{110} + A_{111} = (q+1)^2 \scrE_{11}^0 + q \scrE_{11}^1.
\end{align*}
Eq.~\eqref{eq_Delta_summaryreal_special} implies $(\Delta_0(A_{120}))_{12} = \pm\sqrt{(q+1)^2}$ and $(\Delta_1(A_{120}))_{12} = \pm\sqrt{q}$.
We choose $(\Delta_0(A_{120}))_{12} = q+1$ and $(\Delta_1(A_{120}))_{12} = \sqrt{q}$.

Applying Eq.~\eqref{eq_Delta_summaryreal_afterchosen} and $A_{120} A_{211} = q A_{111} = q(q^2+q) \scrE_{11}^0 -q \scrE_{11}^1$ yield
\begin{align*}
(\Delta_0(A_{121}))_{12} = \frac{q (q^2+q)}{q+1} = q^2 \quad \text{ and } \quad (\Delta_1(A_{121}))_{12} = \frac{-q}{\sqrt{q}} = -\sqrt{q}.
\end{align*}

\subsection{Semidefinite programming}\label{subsec:sdp}

We apply Theorem~\ref{thm:sdp} for $(n,|\cC|,d)_q$ subspace codes $\cC \subseteq \cP(V)$.
Then $b_{ijl} = |(\cC \times \cC) \cap R_{ijl}|$ is the number of pairs $(U,W)$ of codewords in $\cC$ such that $\dim(U)=i$, $\dim(W)=j$, and $\min\{i,j\}-\dim(U \cap W)=l$.
The minimum subspace distance of $d$ implies that $b_{ijl}=0$ for triples $i,j,l$ satisfying $i \ne j$ or $1 \le l$ if $l < \min\{i,j\}+(d-i-j)/2$.
In particular, the number of $i$-subspaces in $\cC$ is given by $x_i=b_{ii0}$ and they fulfill
\begin{align}
b_{ijl} = b_{jil},&&
b_{ii0}^2 = \sum_l b_{iil}, \quad \text{ and}&&
b_{ii0}b_{jj0} = \sum_l b_{ijl}. \label{bij}
\end{align}

Since the last two conditions of Equations~(\ref{bij}) cannot be expressed as constraints in an SDP, we implement only two inequalities:
First, $b_{ii0}^2 \le \sum_l b_{iil}$ corresponds via the Schur complement to $\left(\begin{smallmatrix}1&b_{ii0}\\b_{ii0}&\sum_l b_{iil}\end{smallmatrix}\right) \succcurlyeq 0$.
Second, $b_{ii0} b_{jj0} \ge \sum_l b_{ijl}$ is equivalent to $b_{ii0}^2 b_{jj0}^2 \ge (\sum_l b_{ijl})^2$ and using Equations~(\ref{bij}) this is again equivalent to $\left(\begin{smallmatrix}\sum_l b_{iil}&\sum_l b_{ijl}\\\sum_l b_{ijl}&\sum_l b_{jjl}\end{smallmatrix}\right) \succcurlyeq 0$.
But this constraint is redundant as it is implied by $\sum_{il} \frac{b_{iil}}{m_{iil}} \Delta_0(A_{iil}) + \sum_{i<j,l} \frac{b_{ijl}}{m_{ijl}} (\Delta_0(A_{ijl})+\Delta_0(A_{jil})) \succcurlyeq 0$.

Since $|\cC| = \sum_i b_{ii0}$ and $|\cC|^2 = \sum_{ijl} b_{ijl}$, the inequality $\sum_{ijl} b_{ijl} \ge (\sum_i b_{ii0})^2$ is valid and, using again the Schur complement, can be expressed as $\left(\begin{smallmatrix}1&\sum_{i} b_{ii0}\\\sum_{i} b_{ii0}&\sum_{ijl} b_{ijl}\end{smallmatrix}\right) \succcurlyeq 0$.
This constraint can be sharpened by considering pairs of fibers.
On the one hand, we have $x_i+x_j=b_{ii0}+b_{jj0}$.
On the other hand, we have $(x_i+x_j)^2=x_i^2+2x_ix_j+x_j^2=\sum_l b_{iil}+2\sum_l b_{ijl}+\sum_l b_{jjl}$.
The Schur complement shows then that $\left(\begin{smallmatrix}1&b_{ii0}+b_{jj0}\\b_{ii0}+b_{jj0}&\sum_l b_{iil}+2\sum_l b_{ijl}+\sum_l b_{jjl}\end{smallmatrix}\right) \succcurlyeq 0$ is equivalent to $\sum_l b_{iil}+2\sum_l b_{ijl}+\sum_l b_{jjl} \ge (b_{ii0}+b_{jj0})^2$.

Using Equations~(\ref{eq:XAm}) and~(\ref{bij}), we have 
\begin{align*}
 \frac{b_{ijl}}{m_{ijl}} \Delta_s(A_{ijl}) + \frac{b_{jil}}{m_{jil}} \Delta_s(A_{jil}) = \frac{b_{ijl}}{m_{ijl}} \left( \Delta_s(A_{ijl}) + \Delta_s(A_{jil}) \right)
\end{align*}
for $i \ne j$, which is a symmetric matrix.
Hence, using only $b_{ijl}$ for $i \le j$ fulfills the condition of SDPs to consist of symmetric matrices.

The complete SDP is given by the general conditions
\begin{align*}
\max \sum_i b_{ii0} & \text{ subject to} \\
\sum_{il} \frac{b_{iil}}{m_{iil}} \Delta_s(A_{iil}) + \sum_{i<j,l} \frac{b_{ijl}}{m_{ijl}} (\Delta_s(A_{ijl})+\Delta_s(A_{jil})) \succcurlyeq 0 &\text{ for all } s \\
\left(\begin{smallmatrix}1&b_{ii0}\\b_{ii0}&\sum_l b_{iil}\end{smallmatrix}\right) \succcurlyeq 0 &\text{ for all } i \\
\left(\begin{smallmatrix}1&b_{ii0}+b_{jj0}\\b_{ii0}+b_{jj0}&\sum_l b_{iil}+2\sum_l b_{ijl}+\sum_l b_{jjl}\end{smallmatrix}\right) \succcurlyeq 0 &\text{ for all } i<j \\
b_{ijl} \in \RR &\text{ for all } i\le j,l
\intertext{and the problem specific conditions are given by}
0 \le b_{ijl} \le A_q(n,2\lceil d/2\rceil;i) \cdot A_q(n,2\lceil d/2\rceil;j) &\text{ for all } i \le j,l \text{ with } i \ne j \text{ or } 1 \le l \\
0 \le b_{ii0} \le A_q(n,2\lceil d/2\rceil;i) &\text{ for all } i \\
b_{ijl}=0 \text{ for all } i \le j,l \text{ satisfying } i \ne j \text{ or } 1 \le l &\text{ if } l < \min\{i,j\}+(d-i-j)/2. \\
\end{align*}
This SDP is bounded and the assignment $b_{ijl} = 0$ for all $i\le j,l$ is a feasible solution.
Although $A_q(n,2\lceil d/2\rceil;k)$ is often not known explicitly, it can be replaced by a suitable upper bound, cf. \url{http://subspacecodes.uni-bayreuth.de/} associated with~\cite{HKKW2016Tables}.

The restriction of the variables in the SDP to a subset of the fibers implies the following
\begin{lemma}
Let $K$ be a subset of $\{0, \ldots, n\}$.
If $i$ and $j$ in the SDP above are restricted to values in $K$, then the optimal value of this SDP is an upper bound for $A_q(n,d;K)$.
\end{lemma}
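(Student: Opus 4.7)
The plan is to exhibit, for every $(n,N,d;K)_q$ subspace code $\cC$, a feasible point of the $K$-restricted SDP whose objective value equals $N=|\cC|$. Since the SDP is a maximization, this shows that its optimal value is at least $|\cC|$ for every admissible $\cC$, hence at least $A_q(n,d;K)$.

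First I would fix $\cC$ with $K(\cC)\subseteq K$ and set $b_{ijl}:=|(\cC\times\cC)\cap R_{ijl}|$ for $i,j\in K$ (extended to $i>j$ by the symmetry $b_{ijl}=b_{jil}$ coming from $R_{jil}=R_{ijl}^T$). The objective value is $\sum_{i\in K}b_{ii0}=\sum_{i\in K}x_i(\cC)=|\cC|=N$, since every codeword has dimension in $K$. The semidefinite constraint
\begin{align*}
\sum_{i,l}\frac{b_{iil}}{m_{iil}}\Delta_s(A_{iil})+\sum_{i<j,l}\frac{b_{ijl}}{m_{ijl}}\bigl(\Delta_s(A_{ijl})+\Delta_s(A_{jil})\bigr)\succcurlyeq 0
\end{align*}
is exactly $\Delta_s(D(\cC))\succcurlyeq 0$ from Theorem~\ref{thm:sdp}, after grouping by the symmetry above and observing that $b_{ijl}=0$ whenever $i\notin K$ or $j\notin K$, so $D(\cC)$ collapses onto the stated restricted sum. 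The two Schur-complement constraints hold with equality by~(\ref{bij}), since $b_{ii0}^2=\sum_l b_{iil}$ and $(b_{ii0}+b_{jj0})^2=\sum_l b_{iil}+2\sum_l b_{ijl}+\sum_l b_{jjl}$.

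For the box constraints I would use the fact that the $i$-subspaces of $\cC$ form a constant-dimension code whose pairwise subspace distances $2(i-\dim(U\cap W))$ are all even, so its minimum distance is at least $2\lceil d/2\rceil$; therefore $b_{ii0}=x_i(\cC)\leq A_q(n,2\lceil d/2\rceil;i)$ and consequently $b_{ijl}\leq x_i(\cC)\,x_j(\cC)\leq A_q(n,2\lceil d/2\rceil;i)\cdot A_q(n,2\lceil d/2\rceil;j)$. Non-negativity of $b_{ijl}$ is immediate. The final support condition $b_{ijl}=0$ for $l<\min\{i,j\}+(d-i-j)/2$ with $(i,j,l)\neq(i,i,0)$ is just the minimum-distance hypothesis: if $(U,W)\in R_{ijl}$ with $U\neq W$ then $d_s(U,W)=|i-j|+2l\geq d$, equivalently $l\geq\min\{i,j\}+(d-i-j)/2$.

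No step is really a serious obstacle; the content of the lemma is that the SDP is an honest relaxation and the verification is constraint-by-constraint. The only point requiring a little care is the semidefinite constraint, where one must recognise that the expression appearing in the SDP is $\Delta_s(D(\cC))$ after the symmetry collapse $b_{ijl}=b_{jil}$ and the vanishing of all $b_{ijl}$ with $\{i,j\}\not\subseteq K$; granting that, everything else is an immediate consequence of the combinatorial definition of $b_{ijl}$ together with~(\ref{bij}).
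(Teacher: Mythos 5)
Your proof is correct and is exactly the argument the paper leaves implicit (the lemma is stated without proof as an immediate consequence of the SDP's construction): every code $\cC$ with $K(\cC)\subseteq K$ yields a feasible point via $b_{ijl}=|(\cC\times\cC)\cap R_{ijl}|$ with objective value $|\cC|$, the semidefinite constraint being $\Delta_s(D(\cC))\succcurlyeq 0$ from Theorem~\ref{thm:sdp} and the remaining constraints following from~(\ref{bij}) and the distance condition. Your constraint-by-constraint verification, including the observation that $b_{ijl}=0$ for $\{i,j\}\not\subseteq K$ so that nothing is lost by restricting the variables, is complete and accurate.
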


\section{Theorem~\ref{thm:A_q74_bnd} and Related Results}\label{sec:main_thm}

Throughout this section let $\scrC$ be a subspace code of $\FF_q^7$ with minimum distance $4$.
We denote the number of elements of $\scrC$ in the $i$-th fiber (so of dimension $i$) by $x_i$.
By Theorem~\ref{thm:sdp} and Table~\ref{table:irred_reps}, we obtain a semidefinite program.
Optimizing this program with the SDP solver SDPA-GMP, we verified Theorem~\ref{thm:A_q74_bnd}.
The purpose of this section is to motivate Theorem~\ref{thm:A_q74_bnd} and provide some partial
results which might show Theorem~\ref{thm:A_q74_bnd} for all $q$.

First let us note the following result for the inner distributions of $\scrC$ in the binary case:
\begin{lemma}\label{lem:dist_subspace_code}
  Let $\scrC$ be a subspace code of $\FF_2^7$ with $384 \leq |\scrC| \leq 388$ and minimum distance $4$, then 
  one of the following occurs (up to orthogonality):
  \begin{align*}
    &|\scrC| = 388 \text{ and } x_2 = 41,\, x_4 = 347,\\
    &|\scrC| = 387 \text{ and } x_2 = 41 - \alpha,\, x_4 = 346+\alpha \text{ for } \alpha \in \{ 0, 1, \ldots, 5 \},\\
    &|\scrC| = 386 \text{ and } x_2 = 41 - \alpha,\, x_4 = 345+\alpha \text{ for } \alpha \in \{ 0, 1, \ldots, 12 \},\\
    &|\scrC| = 385 \text{ and } x_2 = 41 - \alpha,\, x_4 = 344+\alpha \text{ for } \alpha \in \{ 0, 1, \ldots, 18 \},\\
    &|\scrC| = 384 \text{ and } x_2 = 41 - \alpha,\, x_4 = 343+\alpha \text{ for } \alpha \in \{ 0, 1, \ldots, 23 \} \text{ or}\\
    &|\scrC| = 384 \text{ and } x_2 = 38 - \alpha,\, x_4 = 345+\alpha,\, x_6 = 1 \text{ for } \alpha \in \{ 0, 1, 2 \}.\\
  \end{align*}
  If $|\scrC| = 388$, then $(b_{241}, b_{442})$ is one of the following:
  \begin{align*}
    & (5026, 44058),\\
    & (5027, 44054+x) \text{ for } x \in \{ 0, \ldots, 3\},\\
    & (5028, 44051+x)  \text{ for } x \in \{ 0, \ldots, 4\},\\
    & (5029, 44047+x)  \text{ for } x \in \{ 0, \ldots, 6\},\\
    & (5030, 44044+x)  \text{ for } x \in \{ 0, \ldots, 7\},\\
    & (5031, 44042+x)  \text{ for } x \in \{ 0, \ldots, 7\},\\
    & (5032, 44039+x)  \text{ for } x \in \{ 0, \ldots, 9\},\\
    & (5033, 44037+x)  \text{ for } x \in \{ 0, \ldots, 9\},\\
    & (5034, 44035+x)  \text{ for } x \in \{ 0, \ldots, 9\},\\
    & (5035, 44033+x)  \text{ for } x \in \{ 0, \ldots, 9\},\\
    & (5036, 44032+x)  \text{ for } x \in \{ 0, \ldots, 8\},\\
    & (5037, 44031+x)  \text{ for } x \in \{ 0, \ldots, 8\},\\
    & (5038, 44030+x)  \text{ for } x \in \{ 0, \ldots, 7\},\\
    & (5039, 44029+x)  \text{ for } x \in \{ 0, \ldots, 6\},\\
    & (5040, 44029+x)  \text{ for } x \in \{ 0, \ldots, 4\},\\
    & (5041, 44029+x)  \text{ for } x \in \{ 0, 1, 2\},\\
    & (5042, 44029+x)  \text{ for } x \in \{ 0, 1\}.\\
  \end{align*}
\end{lemma}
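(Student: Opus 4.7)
The plan is to certify the statement computationally using the SDP of Section~\ref{subsec:sdp} specialized to $q = 2$, $n = 7$, $d = 4$, solving it many times: once for each candidate dimension distribution and, in the second part, once for each candidate integer pair $(b_{241}, b_{442})$. The listed distributions and pairs are exactly those for which the SDP remains feasible; every other candidate is ruled out by a dual SDP solution acting as an infeasibility certificate. All SDPs will be solved with SDPA-GMP so that the verdicts are rigorous.

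Before enumeration I would establish a priori bounds to prune candidates. Two distinct $1$-subspaces lie at subspace distance exactly $2$, so $x_1 \leq 1$; by the orthogonality involution of Subsection~\ref{subsec:notation} this gives $x_6 \leq 1$, and trivially $x_0, x_7 \leq 1$. For $k \in \{2, 3, 4, 5\}$ we use the best known bounds on $A_2(7, 4; k)$ (noting $A_2(7, 4; 4) = A_2(7, 4; 3) \leq 381$ and the analogous bounds for the line case). Orthogonality further reduces the search to one representative of each orbit $(x_0, \ldots, x_7) \leftrightarrow (x_7, \ldots, x_0)$, e.g.\ by imposing a lexicographic normalization.

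For the first part, for each target $N \in \{384, \ldots, 388\}$ I would enumerate the integer tuples $(x_0, \ldots, x_7)$ with $\sum_k x_k = N$ obeying the a priori bounds and the normalization, append the equalities $b_{kk0} = x_k$ to the SDP, and decide feasibility. The feasible tuples give the statement after folding in the orthogonality representative. For the second part I would fix $(x_0, \ldots, x_7) = (0, 0, 41, 0, 347, 0, 0, 0)$, append an integer pair $(b_{241}, b_{442})$ as further equality constraints, and iterate the pair through an a priori box, retaining the feasible ones.

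The main obstacle will be the scale of the search: the number of candidate tuples and pairs is large, and each individual SDP requires certified high-precision arithmetic. To make this practical I would first tighten the enumeration boxes by solving a small family of SDPs that maximize and minimize each $x_k$, and separately each of $b_{241}$ and $b_{442}$, over the SDP feasible region subject only to the global constraints $\sum_k b_{kk0} = N$ (and $x_2 = 41$, $x_4 = 347$ for the second part). This typically collapses most candidates before the main sweep begins, after which the remaining SDPs constitute a routine but large computer verification that reuses the infrastructure already developed for Theorem~\ref{thm:A_q74_bnd_q_eq_2}.
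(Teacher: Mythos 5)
Your approach is essentially the paper's: the authors obtain the lemma by solving the SDP of Subsection~\ref{subsec:sdp} with additional equality constraints forcing a candidate dimension distribution, and, for $|\scrC|=388$, by further enumerating the possible values of the $b_{ijl}$'s in the same way. Your a priori pruning ($x_1,x_6\le 1$, $x_2,x_5\le 41$ from Lemma~\ref{lem:linespread}, $x_3,x_4\le 381$, orthogonality normalization, and preliminary max/min SDPs to shrink the boxes) is a sensible elaboration of that.

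There is one concrete step that would fail as written. The distribution-level sweep alone does \emph{not} produce the claimed list at $|\scrC|=388$: the distribution with $x_2=40$, $x_4=348$ is feasible for the SDP with only the constraints $b_{kk0}=x_k$ appended, and it is eliminated only by the finer enumeration of the $b_{ijl}$'s (the paper notes explicitly that this case ``is otherwise feasible'' and is ruled out at the pair level). Your plan applies the $(b_{241},b_{442})$ sweep only after fixing $(x_2,x_4)=(41,347)$, so your first part would terminate with the spurious extra distribution $(40,348)$ still in the list. The fix is small but necessary: run the $b_{ijl}$-level refinement on \emph{every} distribution surviving the first sweep at $|\scrC|=388$ (in particular on $(40,348)$), and only then read off both conclusions of the lemma.
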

To obtain this result, we solve the SDP as described in Subsection~\ref{subsec:sdp} and added additional constraints
which forced certain distributions for the $x_i$. For $|\scrC|=388$ we additionally
determined all possible distributions of the $b_{ijk}$'s using the same idea.
This ruled out $x_2 = 40$ and $x_4 = 248$ (which is otherwise feasible).

We use $x_2 \leq A_q(7,4;2) = q^5+q^3+1$ and $x_5 \leq A_q(7,4;5) = A_q(7,4;2) = q^5+q^3+1$. This is implied by the following lemma due to Beutelspacher and orthogonality.

\begin{lemma}[\cite{Beutelspacher1975}]\label{lem:linespread}
$A_q(n,2k;k) = \frac{q^n-q}{q^k-1}-q+1$ if $k$ divides $n-1$.
\end{lemma}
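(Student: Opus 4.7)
This is Beutelspacher's classical theorem on partial $k$-spreads; recall that $A_q(n,2k;k)$ equals the maximum size of a family of $k$-subspaces of $\FF_q^n$ with pairwise trivial intersection. I would prove it in two parts: a construction yielding the lower bound and a combinatorial counting argument for the upper bound.

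For the lower bound, write $n = kt+1$ and fix a hyperplane $W$ of $V = \FF_q^n$. Since $k \mid n-1 = kt$, $W$ admits an ordinary $k$-spread $\mathcal{T}$ of size $(q^{n-1}-1)/(q^k-1)$. I would modify $\mathcal{T}$ by removing a carefully chosen subset $\mathcal{T}_0 \subseteq \mathcal{T}$ and inserting new $k$-subspaces $K$ of $V$, each meeting $W$ in a $(k-1)$-subspace of some $T \in \mathcal{T}_0$ and extending into $V \setminus W$. Pairwise disjointness of the inserted subspaces translates to a matching condition in the quotients $W/T$ and $W/(T+T')$: different $K$'s sharing a $T$ must extend in different cosets of $T$ in $W$, and different $K$'s meeting different $T,T'$ must extend in different cosets of $T+T'$. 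Solving this matching (for which one has room because $\dim(W/T) = k(t-1) \geq k$ when $t \geq 2$, and the base case $t=1$ is trivial) and verifying the arithmetic yields exactly $(q^n-q)/(q^k-1)-q+1$ elements.

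For the upper bound, let $\mathcal{S}$ be any partial $k$-spread of size $N$ and set $H := (q^n-1) - N(q^k-1)$, the number of uncovered nonzero points; the claim $N \leq (q^n-q)/(q^k-1)-q+1$ is equivalent to $H \geq q^k(q-1)$. For each hyperplane $\Pi$ of $V$, partition $\mathcal{S}$ into $a_\Pi$ elements contained in $\Pi$ and $N - a_\Pi$ meeting $\Pi$ in $(k-1)$-subspaces; these $(k-1)$-subspaces are pairwise disjoint in $\Pi$ and each is disjoint from every $S \subseteq \Pi$ (since $S_i \cap S_j = 0$ in $V$). I would then pick a hyperplane $\Pi$ maximizing $a_\Pi$ and induct on $t$, with base case $n = k+1$ giving $A_q(k+1,2k;k) = 1$, matching the formula. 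The recursive step relates the sizes of the induced partial $k$-spread in $\Pi$, the partial $(k-1)$-spread of traces inside $\Pi$, and the coverage count in $V \setminus \Pi$ (where each of the $N - a_\Pi$ external spread elements covers exactly $q^{k-1}(q-1)$ points).

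The main obstacle is the sharpness of the upper bound. A naive averaging of $a_\Pi$ over all hyperplanes reduces to an algebraic identity (once one substitutes $H = q^n - 1 - N(q^k-1)$), so first-moment arguments give no information, and the obvious covering bound in $V \setminus \Pi$ yields only a constant weaker by a factor of $q$. The sharp constant in $H \geq q^k(q-1)$ requires either a second-moment analysis of $a_\Pi$ (tracking the variance across hyperplanes) or a structural argument forcing the set of uncovered points to lie inside a $(k+1)$-dimensional subspace minus a $k$-subspace, which has exactly $q^k(q-1)$ nonzero points. Reconstructing this step faithfully is the technical heart of Beutelspacher's argument and the part where I would expect to spend most of the effort.
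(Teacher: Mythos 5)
The paper does not prove this lemma at all: it is imported verbatim from Beutelspacher's 1975 paper, so there is no internal argument to compare yours against. Judged on its own terms, your reduction of the upper bound is correct --- writing $H=(q^n-1)-N(q^k-1)$ for the number of uncovered nonzero vectors, the claimed bound is indeed equivalent to $H\ge q^k(q-1)$, and your hyperplane bookkeeping (each spread element not in $\Pi$ covers exactly $q^{k-1}(q-1)$ vectors off $\Pi$) is right. But the decisive step is exactly the one you declare you have not reconstructed: you observe that first moments of $a_\Pi$ give nothing and that the naive covering count loses a factor of $q$, and then you stop. As written, the argument only yields $N\le\lfloor(q^n-1)/(q^k-1)\rfloor$, not the theorem. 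The missing ingredient in Beutelspacher's route is the congruence you set up but never use: since the holes off a hyperplane $\Pi$ number $(q-1)q^{k-1}\bigl(q^{n-k}-(N-a_\Pi)\bigr)$, every hyperplane satisfies $h_\Pi\equiv H\pmod{q^{k-1}(q-1)}$, and moreover $H\equiv q-1\pmod{q^k-1}$ (so $H>0$). Combining these divisibility constraints with the averaging identity $\sum_\Pi h_\Pi=H\cdot\frac{q^{n-1}-1}{q-1}$ and an induction on $t$ through a suitably chosen hyperplane is what forces the sharp constant $q^k(q-1)$; neither of your two proposed substitutes (a second-moment computation, or a structural classification of the hole set) is carried out, and the second is essentially equivalent to the theorem itself.

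Two smaller points. First, your lower-bound construction is also only a sketch; a cleaner standard construction takes a chain $V_1\subset V_2\subset\cdots\subset V_t=V$ with $\dim V_i=ki+1$, partitions each difference $V_{i+1}\setminus V_i$ into $q^{ki+1}$ $k$-subspaces meeting $V_i$ trivially, and adds one $k$-subspace of $V_1$, giving $\sum_{i=1}^{t-1}q^{ki+1}+1$, which equals the stated formula. Second, both your base case $A_q(k+1,2k;k)=1$ and the formula itself silently require $k\ge2$ (for $k=1$ the left side is $\gauss{n}{1}$, not $\gauss{n}{1}-q$); Beutelspacher's theorem is stated for remainder $r=n\bmod k$ with $0<r<k$, and the paper only invokes the case $k=2$, $n=7$, so this is a caveat about the statement rather than an error in your argument.
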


The following lemma generalizes $x_3+x_4 \le 381$ in the binary case from~\cite[Lemma~4.2.ii]{Honold2016}.

\begin{lemma}\label{lem:rel34}
  We have $x_3 + x_4 \leq (q^2-q+1) \gauss{7}{1}$ with equality only if $x_3 = 0$ or $x_4 = 0$.
\end{lemma}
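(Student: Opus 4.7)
The identity $(q^2-q+1)\gauss{7}{1} = \gauss{7}{2}/\gauss{3}{1}$ rewrites the claim as $\gauss{3}{1}(x_3+x_4) \le \gauss{7}{2}$. The plan is to associate to every codeword in $\scrC_3 \cup \scrC_4$ a canonical set of $\gauss{3}{1}$ lines of $\FF_q^{7}$, in such a way that the sets are pairwise disjoint; summing sizes then yields the inequality.

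For a codeword plane $P$ I would take the $\gauss{3}{2}=\gauss{3}{1}$ lines contained in $P$: two distinct codeword planes $P,P'$ satisfy $\dim(P\cap P')\le 1$, so they share no line. For a codeword solid $S$ I would use the polarity and take the $\gauss{3}{1}$ lines inside the plane $S^{\perp}$. For distinct $S,S'\in\scrC_4$, the identity
\[
\dim(S^{\perp}\cap S'^{\perp})=7-\dim(S+S')=\dim(S\cap S')-1\le 1
\]
shows that the dual planes share no line either. Moreover, a line $L$ lying in a codeword plane $P$ cannot also lie in a codeword solid $S$, since this would force $\dim(P\cap S)\ge 2$, so this type of overlap is automatically ruled out.

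The remaining obstacle—and the main combinatorial difficulty—is that a line $L$ may simultaneously satisfy $L\le P$ for some $P\in\scrC_3$ and $L\le S^{\perp}$ for some $S\in\scrC_4$ (equivalently $S\le L^{\perp}$), and this is not forbidden by the distance-$4$ condition. To handle this, I would attempt a fractional / Hall-type assignment of $\gauss{3}{1}$ lines per codeword in the bipartite graph of codewords versus lines, where the Hall condition is verified using the partial-spread bound of Lemma~\ref{lem:linespread} (at most $q^{3}+1$ codeword solids through a single line of $\FF_q^{7}$, and dually at most one codeword solid contained in $L^{\perp}$) together with the fact that each line lies in at most one codeword plane. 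The hardest step is controlling the deficiency: showing that every subset $\mathcal{U}\subseteq \scrC_3\cup\scrC_4$ has at least $\gauss{3}{1}|\mathcal{U}|$ available lines collectively, so that the required system of distinct representatives exists.

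For the equality clause, equality in $\gauss{3}{1}(x_3+x_4)\le \gauss{7}{2}$ forces every line of $\FF_q^{7}$ to be used exactly once by the assignment, a Steiner-type covering condition. I would argue that this rigid configuration is only consistent when one of $\scrC_3$ or $\scrC_4$ is empty: otherwise a codeword plane $P$ and a codeword solid $S$ would both have to saturate the same portion of the line-set of $\FF_q^{7}$, and comparing this with the cross-constraint $\dim(P\cap S)\le 1$ produces an uncovered line, contradicting tightness.
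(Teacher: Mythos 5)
Your reduction to a line count is the right instinct for the individual bounds $x_3 \le \gauss{7}{2}/\gauss{3}{2}$ and $x_4 \le \gauss{7}{2}/\gauss{3}{2}$, but the step you flag as ``the main combinatorial difficulty'' is not merely hard --- it is false, and this kills the approach. The Hall condition you need can genuinely fail: take a codeword plane $P$ and a codeword solid $S$ with $P = S^{\perp}$. This is compatible with the distance condition (one only needs $\dim(S\cap S^{\perp})\le 1$, which holds for generic $S$), yet the $\gauss{3}{1}$ lines you assign to $P$ and the $\gauss{3}{1}$ lines you assign to $S$ are then literally the same set, so the pair $\{P,S\}$ has only $\gauss{3}{1}$ available lines where your SDR needs $2\gauss{3}{1}$. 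More quantitatively, since each plane $P$ can coincide with $S^{\perp}$ for at most one codeword solid, a worst-case count with multiplicities only yields $\gauss{3}{1}(x_3+x_4) \le \gauss{7}{2} + \gauss{3}{1}\min\{x_3,x_4\}$, i.e.\ $\max\{x_3,x_4\} \le (q^2-q+1)\gauss{7}{1}$ --- exactly the trivial bound you started from, with no gain on the sum. The deficiency is not a lower-order error you can absorb; it scales linearly with the code, so no purely local Hall-type repair closes the gap, and the equality analysis inherits the same problem.

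This is why the paper does not argue combinatorially here. Its proof of Lemma~\ref{lem:rel34} extracts two positive semidefinite $2\times 2$ matrices from the nontrivial irreducible representations $\Delta_1$ and $\Delta_2$ of the coherent configuration (Theorem~\ref{thm:sdp} applied to the fibers of planes and solids), forms a positive combination $N_t = N_1 + t_1 N_2 + t_2((N_2)_{\{1\}}+(N_2)_{\{2\}})$ with explicitly chosen $t_1,t_2>0$, and rearranges $\det(N_t)\ge 0$ into $b \le ((q^2-q+1)\gauss{7}{1}-c)\cdot(1+c/(q\psi\gauss{3}{1}^2))^{-1}$, which gives both the bound on $x_3+x_4$ and the strictness when $x_3,x_4>0$. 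The eigenvalue information in $\Delta_1,\Delta_2$ is precisely what controls the cross-interaction between the two fibers that your line-assignment cannot see. If you want to pursue your route, you would at minimum need an additional argument bounding the number of pairs $(P,S)$ with $\dim(P\cap S^{\perp})\ge 2$, and it is not clear such a bound is available by elementary counting.
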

\begin{proof}
    We write $b=x_3$ and $c=x_4$ to avoid indices.
    The only allowed relations are (up to transposition and orthogonality) $R_{330}, R_{332}, R_{333}, R_{342}, R_{343}$.
    Let $x_3 \beta$ denote the number of pairs in relation $R_{332}$, $\delta$ the number of pairs in relation $R_{342}$,
    $x_4 \gamma$ the number of pairs in relation $R_{442}$.
    From $\Delta_1(A_{abc})$ and, respectively, $\Delta_2(A_{abc})$ and Theorem~\ref{thm:sdp} we obtain the following positive semidefinite matrices
    (after some simplifications and multiplying by $q^3 \sqrt{q} \psi \gauss{3}{1} \gauss{4}{1} \gauss{5}{1} \gauss{7}{1}$):
    \begin{align*}
        &N_{1} = 
        \begin{pmatrix}
            b q^3 ( \gauss{3}{1} \gauss{7}{1} - \gauss{3}{1}^2 b + \beta \gauss{7}{1} ) &
            q^{5/2} (\gauss{7}{1} \delta - bc \gauss{3}{1} \gauss{4}{1})
            \\
            q^{5/2} (\gauss{7}{1} \delta - bc \gauss{3}{1} \gauss{4}{1})
            &
            c q^3 ( \gauss{3}{1} \gauss{7}{1} - \gauss{3}{1}^2 c + \gamma \gauss{7}{1} )
        \end{pmatrix}\\
        &N_2 = 
        \begin{pmatrix}
          b q \gauss{2}{1} (\gauss{3}{1} (q^7+q^5+b-1) - \beta \gauss{2}{1}^2 \psi)
          & 
          -\gauss{2}{1}\gauss{3}{1} ( (q^3+1) \delta - \varphi bc)
          \\
          -\gauss{2}{1}\gauss{3}{1} ( (q^3+1) \delta - \varphi bc)
          & 
          c q \gauss{2}{1} (\gauss{3}{1} (q^7+q^5+c-1) - \gamma \gauss{2}{1}^2 \psi)
        \end{pmatrix}
    \end{align*}
    For an $m \times m$ matrix $M$ and a set $I$, let $M_{I}$ denote the $m \times m$ with
    $(M_{I})_{xy} = M_{xy}$ if $x,y \in I$ and $(M_{I})_{xy} = 0$ otherwise.
    We set $N_t = N_1 + t_1 N_2 + t_2 ((N_2)_{\{1\}} + (N_2)_{\{2\}})$,
    where
    \begin{align*}
        &t_1 = \frac{q^{5/2} \gauss{7}{1} }{\gauss{2}{1} \gauss{6}{1}}, && t_2 = \frac{ q^2 \gauss{5}{1} \gauss{7}{1} }{ \gauss{2}{1}^2 \gauss{6}{1} (q^2+q^{3/2}+q+q^{1/2}+1)}
    \end{align*}
    For $q \geq 2$ the factors $t_1, t_2$ are positive, so $N_t$ is a positive semidefinite matrix.
    Hence, $\det(N_t) \geq 0$. Rearranging for $b$ yields
    \begin{align*}
     0 \leq b \leq ((q^2-q+1) \gauss{7}{1} - c) \frac{1}{1 + \frac{c}{q \psi \gauss{3}{1}^2}}.
    \end{align*}
    This implies the assertion.
\end{proof}

This can be improved to:

\begin{corollary}\label{lem:rel134}
We have $x_1 + x_3 + x_4 \leq (q^2-q+1) \gauss{7}{1}$ with equality only if $x_3 = 0$ or $x_4 = 0$.
\end{corollary}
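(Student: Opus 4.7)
The strategy is to mirror the proof of Lemma~\ref{lem:rel34}, but with the $2\times 2$ semidefinite analysis enlarged to a $3\times 3$ one that also incorporates the point fiber. A preliminary observation is that any two distinct points of $\cP(V)$ lie at subspace distance $2<4$, so $x_1\leq 1$; the case $x_1=0$ reduces at once to Lemma~\ref{lem:rel34}, so the content of the corollary is in the case $x_1=1$.

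Assume $x_1=1$ and let $P$ be the unique codeword point. The minimum distance $4$ forces $P\not\subseteq\pi$ and $P\not\subseteq S$ for every codeword plane $\pi$ and every codeword solid $S$ (otherwise the distances would be $2$ or $3$), so the only relations between fiber $1$ and fibers $3,4$ that occur in $\cC\times\cC$ are $R_{131}$ and $R_{141}$, with $b_{131}=x_3$ and $b_{141}=x_4$. A glance at the $\Delta_2$ column of Table~\ref{table:irred_reps} shows that $\Delta_2(A_{1jl})=0$ for all $j,l$, so extending the matrices $N_1,N_2$ of Lemma~\ref{lem:rel34} by a row and column indexed by fiber $1$ leaves $N_2$ unchanged (a vanishing row and column) while $N_1$ becomes a genuine $3\times 3$ matrix $N_1'$ whose new entries come from $\Delta_1(A_{110}), \Delta_1(A_{111}), \Delta_1(A_{131}),\Delta_1(A_{141})$ in the usual $b_{abl}/m_{abl}$ normalisations.

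Taking the same positive combination $N_t'=N_1'+t_1 N_2'+t_2((N_2')_{\{1\}}+(N_2')_{\{2\}})$ as in Lemma~\ref{lem:rel34} then produces a psd $3\times 3$ matrix whose lower-right $2\times 2$ block is the matrix $N_t$ of that lemma. Expanding the condition $\det N_t'\geq 0$ via the Schur complement with respect to the new fiber-$1$ entry, and combining it with $\det N_t\geq 0$, yields after simplification the linear inequality $a+b+c\leq (q^2-q+1)\gauss{7}{1}$. For the equality analysis one splits into $a=0$, handled by Lemma~\ref{lem:rel34}, and $a=1$, where one shows that the simultaneous vanishing of $\det N_t'$ and its Schur complement forces $b=0$ or $c=0$.

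The hard part is the concrete algebra: verifying that the coefficients $t_1,t_2$ inherited from Lemma~\ref{lem:rel34} still yield a psd matrix after the fiber-$1$ extension, and reducing the $3\times 3$ determinant condition to the clean linear bound $a+b+c\leq (q^2-q+1)\gauss{7}{1}$ rather than a weaker nonlinear inequality, together with transferring the equality analysis through the Schur complement to the $a=1$ case. If the borrowed combination does not suffice for the full equality statement, a minor adjustment of $t_1,t_2$ (optimised for the enlarged matrix) should make the argument go through.
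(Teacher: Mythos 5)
Your outline matches the paper's proof in all structural respects: reduce to $x_1=1$ via the minimum distance, observe that the only relations joining fiber $1$ to fibers $3,4$ are $R_{131}$ and $R_{141}$, enlarge the two matrices of Lemma~\ref{lem:rel34} to $3\times 3$ (with the fiber-$1$ row and column of the $\Delta_2$-matrix vanishing, exactly as you say), and take a positive combination $N_1+t_1N_2+t_2((N_2)_{\{2\}}+(N_2)_{\{3\}})$. Your hedge about possibly needing to re-optimise $t_1,t_2$ is unnecessary: the paper uses literally the same coefficients as in Lemma~\ref{lem:rel34}, just written in an algebraically equivalent form (using $\gauss{6}{1}=\gauss{2}{1}\gauss{3}{1}(q^2-q+1)$ and $\gauss{5}{1}=(q^2+q^{3/2}+q+q^{1/2}+1)(\gauss{3}{1}-\sqrt{q}\gauss{2}{1})$).

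The one place where your proposal misdescribes what actually happens is the final extraction of the bound, and that is where all the substance lies. The condition $\det N_t\geq 0$ does \emph{not} simplify to the linear inequality $1+x_3+x_4\leq(q^2-q+1)\gauss{7}{1}$. What the paper does is solve $\det N_t\geq 0$ for $x_3$ to get a genuinely nonlinear bound $x_3\leq u(q,x_4)$, and then maximise the univariate function $1+u(q,x_4)+x_4$ over $0\leq x_4\leq(q^2-q+1)\gauss{7}{1}$; the maximum is attained at the non-integer point $\sqrt{q\gauss{4}{1}^2(q^4+q^2+1)^2}-q(\gauss{7}{1}+q^2(q^2+1))$, and one must verify that the resulting value is at most $(q^2-q+1)\gauss{7}{1}$. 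This optimisation step is the actual content of the corollary beyond Lemma~\ref{lem:rel34}, and your proposal leaves it as an assertion (and predicts the wrong shape for it), so as written the argument is incomplete at exactly the point that needs proving. The equality analysis likewise does not come from a Schur-complement factorisation but from the fact that the $x_1=1$ branch yields a maximum value bounded by $(q^2-q+1)\gauss{7}{1}$, with the equality case reverting to Lemma~\ref{lem:rel34}.
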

\begin{proof}
The minimum distance implies $x_1 \le 1$. If $x_1=0$, then Lemma~\ref{lem:rel34} shows the claim. Hence, we assume $x_1=1$.

The only allowed relations are (up to transposition and orthogonality) $R_{110}$, $R_{131}$, $R_{141}$, $R_{333}$, $R_{332}$, $R_{330}$, $R_{343}$, and $R_{342}$.
Let $(x_3^2-x_3)a_{332}$ denote the number of pairs in relation $R_{332}$, $(x_4^2-x_4)a_{442}$ the number of pairs in relation $R_{442}$, and $x_3 x_4 a_{342}$ the number of pairs in relation $R_{342}$.
From $\Delta_1(A_{abc})$ and, respectively, $\Delta_2(A_{abc})$ and Theorem~\ref{thm:sdp} we obtain the following positive semidefinite matrices (after some simplifications and multiplying by $\gauss{7}{1}$):

\begin{align*}
N_1=\resizebox{0.95\hsize}{!}{$
\left(\begin{matrix}
1&
-{\frac {x_3}{\sqrt {[5] \varphi} ( q^5+q^2 ) }}&
-{\frac {x_4 \sqrt {\varphi}}{\sqrt {[5]} q^{5/2}[3]  \psi }}\\
-{\frac {x_3}{\sqrt {[5] \varphi} ( q^5+q^2 ) }}&
{\frac {x_3  ( [7] [3] -a_{332} [7] +x_3( a_{332} [7] -q^2 -[4] -[5] +1 ) ) }{[5] q^{3} ( q^4+q^2+1 ) \varphi [2]}}&
{\frac {x_3 x_4  ( a_{342} [7]+[2]-[4]-[5]-[6]+1 ) }{ q^{7/2}[5] [3] \varphi \psi [2]}}\\
-{\frac {x_4 \sqrt {\varphi}}{\sqrt {[5]} q^{5/2}[3]  \psi }}&
{\frac {x_3 x_4  ( a_{342} [7]+[2]-[4]-[5]-[6]+1 ) }{ q^{7/2}[5] [3] \varphi \psi [2]}}&
{\frac {x_4  ( [7] [3]-a_{442} [7]+x_4(a_{442} [7]-q^2-[4]-[5]+1) ) }{[5] q^{3} ( q^4+q^2+1 ) \varphi [2]}}
\end{matrix}\right)$}
\end{align*}

\begin{align*}
N_2=\resizebox{0.95\hsize}{!}{$
\left(\begin{matrix}
0&
0&
0\\
0&
{\frac {x_3  (a_{332}(x_3-1)(q^2-[5])+[7](q^3+q-1)+[3] x_3-[5]+1) }{[5] {q}^{5} \varphi  ( q^4+q^2+1 ) }}&
-{\frac {x_3 x_4  (a_{342}(q^3+1)-\varphi) }{[5] {q}^{6} \psi  \varphi }}\\
0&
-{\frac {x_3 x_4  (a_{342}(q^3+1)-\varphi) }{[5] {q}^{6} \psi \varphi }}&
{\frac {x_4  ( a_{442}(x_4-1)(q^2-[5])+[7](q^3+q-1)+x_4 [3]-[5]+1 ) }{[5] {q}^{5} \varphi  ( q^4+q^2+1 ) }}
\end{matrix}\right)$}
\end{align*}
We set $N_t = N_1 + t_1 N_2 + t_2 ((N_2)_{\{2\}} + (N_2)_{\{3\}})$, where
\begin{align*}
&t_1=\frac { q^{5/2}[7]}{[3] \psi {[2]}^{2}},&&
t_2=\frac { [7]q^2([3]-\sqrt{q}[2]) }{[3] \psi {[2]}^{3}}.
\end{align*}
For $q \geq 2$ the factors $t_1, t_2$ are positive, so $N_t$ is a positive semidefinite matrix.
Hence, $\det(N_t) \geq 0$ and solving this inequality for $x_3$ yields an upper bound for $x_3$, say $u(q,x_4)$.
Then, the objective function is upper bounded by $1+u(q,x_4)+x_4$, which has its maximum on $0 \le x_4 \le (q^2-q+1) \gauss{7}{1}$ at $\sqrt{q\gauss{4}{1}^2(q^4+q^2+1)^2}-q([7]+q^{2}\varphi)$
with the value
$2\sqrt{q}(q([7]+q[4])-\sqrt{q}-q^{3/2}-5/2q^{5/2}-q^{7/2}-2q^{9/2}-q^{11/2}-q^{13/2}+1)$, which is at most $(q^2-q+1) \gauss{7}{1}$.

\end{proof}

\begin{lemma}\label{lem:rel23}
    We have $x_2+x_3 \leq (q^2-q+1) \gauss{7}{1}$ with equality only if $x_2=0$.
\end{lemma}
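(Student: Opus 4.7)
I would follow the template of Lemma~\ref{lem:rel34}. At minimum distance $4$, the only non-identity relations that can occur between codewords in fibers $\{2, 3\}$ are, up to transposition and orthogonality, $R_{222}$, $R_{232}$, $R_{332}$, and $R_{333}$. Writing $b_{222}, b_{232}, b_{332}, b_{333}$ for the corresponding pair counts together with $b_{220} = x_2$ and $b_{330} = x_3$, we have four extra unknowns beyond $x_2, x_3$.

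From Table~\ref{table:irred_reps} I would read off the $2 \times 2$ matrices $N_s := \Delta_s(D(\scrC))$ indexed by fibers $\{2, 3\}$ for $s = 1$ and $s = 2$; both representations are nontrivial on each of the two fibers, so these are genuine $2 \times 2$ positive semidefinite matrices. Their $(2,2)$-entries are affine in $x_2$ and $b_{222}$, their $(3,3)$-entries are affine in $x_3, b_{332}, b_{333}$, and their off-diagonals are affine in $b_{232}$. The representation $\Delta_3$ is supported only on fiber $3$ and contributes a scalar PSD constraint linking $x_3, b_{332}, b_{333}$; I would use it to eliminate the extra variable $b_{333}$ in favor of $x_3$ and $b_{332}$ before forming a combined matrix. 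This is the principal new feature relative to Lemma~\ref{lem:rel34}, where the fiber-$4$ block admitted only the single non-identity relation $R_{442}$.

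Next, as in Lemma~\ref{lem:rel34}, I would form $N_t = N_1 + t_1 N_2 + t_2 \bigl((N_2)_{\{1\}} + (N_2)_{\{2\}}\bigr)$ for suitable positive rational functions $t_1, t_2$ of $q$, chosen so that the off-diagonal of $N_t$ collapses to a manageable expression in $b_{232}$ and the diagonals become clean polynomials in $x_2$ and $x_3$. The PSD condition $\det(N_t) \geq 0$ then rearranges into the form $x_2 \bigl(1 + g(x_3, q)\bigr) \leq (q^2 - q + 1)\gauss{7}{1} - x_3$ with $g \geq 0$ and $g > 0$ whenever $x_3 > 0$, which gives $x_2 + x_3 \leq (q^2 - q + 1)\gauss{7}{1}$ with equality only when $x_2 x_3 = 0$. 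Since $x_2 \leq A_q(7, 4; 2) = q^5 + q^3 + 1 < (q^2 - q + 1)\gauss{7}{1}$ by Lemma~\ref{lem:linespread}, the case $x_3 = 0$ never attains equality, so the only equality case is $x_2 = 0$.

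The main obstacle is tuning $t_1, t_2$ so they are positive for every prime power $q \geq 2$ while keeping the resulting determinant algebraically transparent enough to rearrange into the desired affine bound in $x_2 + x_3$. Handling the second fiber-$3$ relation $R_{333}$ (absent in Lemma~\ref{lem:rel34}) is the secondary technical hurdle and is the reason one likely needs either the scalar $\Delta_3$ constraint or, alternatively, a $3 \times 3$ combined matrix in which $\Delta_3$ is mixed into the PSD pool.
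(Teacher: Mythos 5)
Your overall strategy is the paper's: restrict $\Delta_1$ and $\Delta_2$ to the $\{2,3\}$-fiber block, take a nonnegative combination $N_t$, and rearrange $\det(N_t)\geq 0$ into an affine bound on $x_2+x_3$ whose equality case forces $x_2=0$ (the paper uses the simpler combination $N_t=N_1+t_1N_2$ with a single positive $t_1$, and solves the determinant inequality for $x_3$ rather than $x_2$, but your variant with the $x_3=0$ case handled via $x_2\le q^5+q^3+1$ is an equivalent rearrangement).

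Two points in your plan need correction, though neither is fatal. First, you overcount the free parameters: at minimum distance $4$ a $2$-space and a $3$-space in the code must meet trivially, and likewise two $2$-spaces, so $b_{222}=x_2(x_2-1)$ and $b_{232}=x_2x_3$ are \emph{determined} by $x_2,x_3$; the only genuinely free quantity beyond the dimension distribution is the split of the $3$--$3$ pairs between $R_{332}$ and $R_{333}$, which the paper encodes with a single parameter $\beta$ via $b_{332}=x_3\beta$. Second, your proposal to ``use the scalar $\Delta_3$ constraint to eliminate $b_{333}$'' cannot work as stated: a positive-semidefiniteness condition is an inequality and does not eliminate a variable. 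The correct tool is the counting identity $\sum_l b_{33l}=x_3^2$ from Equations~\eqref{bij}, which gives $b_{333}=x_3^2-x_3-x_3\beta$ exactly; this is what the paper does (implicitly, through the $\beta$-parametrization), and no $\Delta_3$ block or $3\times3$ augmentation is needed. With these two repairs your plan reduces to the paper's proof.
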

\begin{proof}
    We write $a=x_2$ and $b=x_3$ to avoid indices.
    The only allowed relations are (up to transposition and orthogonality) $R_{220}, R_{222}, R_{232}, R_{330}, R_{332}, R_{333}$.
    Let $x_3 \beta$ denote the number of pairs in relation $R_{332}$.
    From $\Delta_1(A_{abc})$ and, respectively, $\Delta_2(A_{abc})$ and Theorem~\ref{thm:sdp} we obtain the following positive semidefinite matrices:
    \begin{align*}
        &N_{1} = 
        \begin{pmatrix}
            a \gauss{4}{1} (\gauss{7}{1} - \gauss{2}{1} a) &
            -ab q^{7/2} \gauss{2}{1} \gauss{3}{1} \sqrt{\varphi}
            \\
            -ab q^{7/2} \gauss{2}{1} \gauss{3}{1} \sqrt{\varphi}
            &
            b q^3 ( \gauss{3}{1} \gauss{7}{1} - \gauss{3}{1}^2 b + \beta \gauss{7}{1} )
        \end{pmatrix}\\
        &N_2 = 
        \begin{pmatrix}
          a q^3 \gauss{2}{1} ((\psi \gauss{3}{1} (q^2 \gauss{4}{1} - 1) + a)
          & 
          ab q^2 \gauss{2}{1} \sqrt{\gauss{3}{1}}
          \\
          ab q^2 \gauss{2}{1} \sqrt{\gauss{3}{1}}
          & 
          b q \gauss{2}{1} (\gauss{3}{1} (q^7+q^5+b-1) - \beta \gauss{2}{1}^2 \psi)
        \end{pmatrix}
    \end{align*}
    Set $N_t = N_1 + t_1 N_2$, where $t_1 = \frac{q^2 \gauss{7}{1}}{\gauss{2}{1}^2 \psi}$.
    As $t_1 \geq 0$, $N_t$ is positive semidefinite, so $\det(N_t) \geq 0$.
    Rearranging this for $b$ yields
    \begin{align*}
        b \leq ((q^2-q+1) \gauss{7}{1} - a) \frac{1}{1 + a \frac{\gauss{2}{1}^2 C }{ q \gauss{5}{1}^3 }},
    \end{align*}    
    where $C = 2 \gauss{2}{1} \sqrt{ q \gauss{3}{1} \psi } - ({{q}^{4}}+3 {{q}^{3}}+3 {{q}^{2}}+3 q+1)$.
    The assertion follows.
\end{proof}

This also shows that only proper subspaces are of interest.

\begin{corollary}\label{cor:dim0167}
If $(q^2-q+1) \gauss{7}{1} +3 \le |\cC|$, then $x_0=x_7=0$ and $x_1+x_6 \le 1$.
\end{corollary}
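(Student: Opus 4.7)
The plan is to derive the three conclusions $x_0=0$, $x_7=0$, and $x_1+x_6\le 1$ separately, in each case by assuming the opposite and producing a better upper bound on $|\cC|$ than the hypothesis $(q^2-q+1)\gauss{7}{1}+3\le|\cC|$ allows. The minimum distance $4$ forces $x_0,x_7\in\{0,1\}$ trivially, and $x_1,x_6\in\{0,1\}$ because any two distinct points, respectively any two distinct hyperplanes, lie at subspace distance exactly $2$.

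For the case $x_0=1$, the identity $d_s(\{0\},U)=\dim U$ restricts every remaining codeword to dimension $\ge 4$. If additionally $x_7=1$, then $d_s(\FF_q^7,U)=7-\dim U\ge 4$ gives $\dim U\le 3$, so $|\cC|\le 2$; otherwise $x_7=0$ and $|\cC|=1+x_4+x_5+x_6$. Using $x_6\le 1$ together with the orthogonal form of Lemma \ref{lem:rel23} (which reads $x_4+x_5\le(q^2-q+1)\gauss{7}{1}$), we obtain $|\cC|\le(q^2-q+1)\gauss{7}{1}+2$, contradicting the hypothesis. Applying the same argument to $\cC^\perp$ disposes of $x_7=0$.

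For $x_1+x_6\le 1$, the only case to rule out is $x_1=x_6=1$. If $P\in\cC$ is the point and $L\in\cC$ any $2$-subspace, then $d_s(P,L)=3-2\dim(P\cap L)\in\{1,3\}$, forcing $x_2=0$; dually, with $H\in\cC$ the hyperplane, $x_5=0$. Moreover $d_s(\{0\},P)=1$ and $d_s(\FF_q^7,H)=1$ rule out $x_0$ and $x_7$. Thus $|\cC|=2+x_3+x_4$, and Corollary \ref{lem:rel134} with $x_1=1$ yields $x_3+x_4\le(q^2-q+1)\gauss{7}{1}-1$, so $|\cC|\le(q^2-q+1)\gauss{7}{1}+1$, again a contradiction.

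No step presents a real obstacle: the nontrivial inequalities are already in hand (Lemma \ref{lem:rel23} and Corollary \ref{lem:rel134}), and the remainder is elementary bookkeeping of which codeword dimensions can coexist under minimum distance $4$. The gap of $2$ between $|\cC|$ and the threshold is comfortably larger than the at-most-$2$ slack introduced by the extremal fibers $x_0,x_1,x_6,x_7$, so each case closes with room to spare.
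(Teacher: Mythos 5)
Your proof is correct and follows essentially the same route as the paper: a case analysis on the extremal fibers $x_0,x_1,x_6,x_7$ driven by the minimum-distance constraint, closed off in each case by the previously established two-fiber inequalities. The only cosmetic differences are that you invoke the orthogonal form of Lemma~\ref{lem:rel23} (bounding $x_4+x_5$) where the paper first passes to $\cC^\perp$ and splits on $x_1$, and that you use Corollary~\ref{lem:rel134} in the $x_1=x_6=1$ case where the paper makes do with Lemma~\ref{lem:rel34}; both variants close every case with the required margin.
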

\begin{proof}
By the minimum distance, we have $ 0 \le x_i \le 1$ for $i \in \{0,1,6,7\}$.
If $x_0=x_7=1$ then the minimum distance shows $\cC \subseteq \{\{0\},\FF_q^7\}$.
If $x_0+x_7=1$ then by orthogonality we can assume without loss of generality that $x_0=0$ and $x_7=1$ and in particular $|\cC| = x_1+x_2+x_3+1$.
If $x_1=1$ then $x_2=0$ and $|\cC| \le A_q(7,4;3)+2 \le (q^2-q+1) \gauss{7}{1} +2$ contradicting the claim.
Hence, we have $|\cC| = x_2+x_3+1 \le (q^2-q+1) \gauss{7}{1} +1$ using the inequality from Lemma~\ref{lem:rel23}.

Assume now that $x_0 = x_7 = 0$ and $x_1 = x_6 = 1$.
Then $x_2 = x_5 = 0$ by the minimum distance and $|\cC| = x_3 + x_4 + 2 \le (q^2-q+1) \gauss{7}{1} + 2$ using the inequality from Lemma~\ref{lem:rel34} and completing the proof.
\end{proof}

We finish with the motivation for the bound in Theorem~\ref{thm:A_q74_bnd}.

\begin{lemma}\label{lem:rel24}
 We have $x_2 + x_4 \leq F(q)$.
\end{lemma}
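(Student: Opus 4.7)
The plan is to mirror the SDP strategy of Lemmas~\ref{lem:rel34}, \ref{lem:rel134}, and~\ref{lem:rel23}, but restricted to the pair of fibers $\{2,4\}$. Write $\alpha = x_2$ and $\gamma = x_4$. The minimum-distance condition of $4$ leaves only the relations $R_{220}, R_{222}$ on fiber $2$, the relations $R_{440}, R_{442}, R_{443}$ on fiber $4$, and $R_{241}, R_{242}$ (with transposes) between the two fibers. Introduce the two free variables $\beta = b_{241}$ and $\delta = b_{442}$, so that the remaining pair-counts are $b_{242} = \alpha\gamma - \beta$, $b_{443} = \gamma(\gamma-1) - \delta$, $b_{220} = \alpha$, $b_{222} = \alpha(\alpha-1)$, $b_{440} = \gamma$.

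First, extract the relevant psd matrices. Using Table~\ref{table:irred_reps} together with the symmetry identities $\Delta_s(A_{44c}) = E_{43}\Delta_s(A_{33c})E_{34}$ and $m_{44c}=m_{33c}$ from Equation~\eqref{eq:XAm}, I form the $2\times 2$ principal submatrices $N_1$ and $N_2$ of $\Delta_1(D(\scrC))$ and $\Delta_2(D(\scrC))$ indexed by $\{2,4\}$; the representation $\Delta_3$ contributes only a scalar $(4,4)$-entry and gives no new information, while $\Delta_0$ is trivial. The $(2,2)$-entries of $N_s$ depend only on $\alpha$, the $(4,4)$-entries are affine in $(\gamma,\delta)$, and both off-diagonals are affine in $\beta$ with different coefficients in $\sqrt{\varphi}$ and in $\sqrt{\gauss{3}{1}}$ respectively, so the two occurrences of $\beta$ can be cancelled by an appropriate positive combination.

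Second, following the template of Lemma~\ref{lem:rel134}, I form
\begin{align*}
N_t \;=\; N_1 + t_1 N_2 + t_2 (N_2)_{\{2\}} + t_3 (N_2)_{\{4\}},
\end{align*}
where the scalars $t_i = t_i(q) > 0$ are picked so that (a) the $\beta$-dependence of the off-diagonal of $N_t$ cancels, and (b) the $\delta$-dependence of $(N_t)_{44}$ has a sign that is harmless (either vanishing, or worst-case analysed on $\delta\in[0,\gamma(\gamma-1)]$). Once this is arranged, $N_t\succcurlyeq 0$ forces $\det N_t \geq 0$, which after clearing denominators becomes a quadratic inequality purely in $\alpha$ and $\gamma$. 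Solving for $\alpha$ in terms of $\gamma$ (or Lagrange-maximising $\alpha+\gamma$ on the implicit curve) yields a continuous upper bound whose maximum over admissible $\gamma$ is the expression $(q^2-q+1)\gauss{7}{1}+q^4-2q^3+3q^2-4q+c(q)$.

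The main obstacle is engineering the coefficients $t_i$ so that, after the $\beta$-cancellation, the resulting determinant inequality produces exactly the quartic correction $q^4-2q^3+3q^2-4q+c$ rather than something looser, and doing so with $t_i>0$ for all $q\ge 2$; this is delicate because of the interplay between the two different irrational factors $\sqrt{\varphi}$ and $\sqrt{\gauss{3}{1}}$ in Table~\ref{table:irred_reps}, and because the presence of two free variables $\beta,\delta$ (rather than one, as in the earlier lemmas) makes the optimisation genuinely two-dimensional. Finally, the splitting $c=3$ for $q\in\{2,3\}$ versus $c=4$ for $q\geq 4$ is to be explained by integrality of $\alpha,\gamma$ together with the additional inputs $x_2\leq q^5+q^3+1$ (from Lemma~\ref{lem:linespread} and orthogonality) and $\alpha+\gamma\leq |\scrC|\leq (q^2-q+1)\gauss{7}{1}+\mathrm{const}$: for small $q$ the continuous optimum falls just short of an integer, sharpening the ceiling by one.
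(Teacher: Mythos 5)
Your plan coincides with the paper's proof: it restricts to fibers $2$ and $4$, takes the psd matrices coming from $\Delta_1$ and $\Delta_2$, forms a positive combination whose coefficients cancel the free variable $b_{241}$ from the off-diagonal, and then maximises $a+\lfloor c(a)\rfloor$ over $0\le a\le q^5+q^3+1$ (via Lemma~\ref{lem:linespread}), which is exactly how $F(q)$ and the $c=3$ versus $c=4$ split arise. The only cosmetic difference is that the paper's combination is $N_t=N_1+t_1N_2+t_2(N_1)_{22}$ with two explicit positive coefficients rather than your three-parameter ansatz.
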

\begin{proof}
    We write $a=x_2$ and $c=x_4$ to avoid indices.
    The only allowed relations are (up to transposition and orthogonality) $R_{220}, R_{222}, R_{241}, R_{242}, R_{440}, R_{442}, R_{443}$.
    Let $\alpha$ denote the number of pairs in relation $R_{241}$,
    and $x_4 \gamma$ the number of pairs in relation $R_{442}$.
    From $\Delta_1(A_{abc})$ and, respectively, $\Delta_2(A_{abc})$ and Theorem~\ref{thm:sdp} we obtain the following positive semidefinite matrices:
    \begin{align*}
        &N_{1} = 
        \begin{pmatrix}
            a \gauss{4}{1} (\gauss{7}{1} - \gauss{2}{1} a) &
            \gauss{2}{1} \varphi (\gauss{7}{1} \alpha - ac q^3 \gauss{2}{1} \gauss{4}{1})
            \\
            \gauss{2}{1} \varphi (\gauss{7}{1} \alpha - ac q^3 \gauss{2}{1} \gauss{4}{1})
            &
            b q^3 ( \gauss{3}{1} \gauss{7}{1} - \gauss{3}{1}^2 b + \beta \gauss{7}{1} )
        \end{pmatrix}\\
        &N_2 = 
        \begin{pmatrix}
          a q^3 \gauss{2}{1} ((\psi \gauss{3}{1} (q^2 \gauss{4}{1} - 1) + a)
          & 
          q\gauss{2}{1} \sqrt{ \gauss{3}{1} } ( ac \varphi - \alpha \psi \gauss{3}{1} )
          \\
          q\gauss{2}{1} \sqrt{ \gauss{3}{1} } ( ac \varphi - \alpha \psi \gauss{3}{1} )
          & 
          b q \gauss{2}{1} (\gauss{3}{1} (q^7+q^5+b-1) - \beta \gauss{2}{1}^2 \psi)
        \end{pmatrix}
    \end{align*}
    Set $N_t = N_1 + t_1 N_2 + t_2 (N_1)_{22}$, where
    \begin{align*}
        &t_1 = \frac{q^2 \sqrt{\varphi} \gauss{7}{1}}{ \gauss{6}{1} \sqrt{\gauss{3}{1}} }, && t_2 = \frac{\gauss{2}{1}^2 \sqrt{\varphi}}{\sqrt{\gauss{3}{1}^3}} - 1.
    \end{align*}
    As $t_1, t_2 \geq 0$, $N_t$ is positive semidefinite, so $\det(N_t) \geq 0$.
    Solving this inequality for $c$ gives an upper bound on $c$ in terms of $a$, say $c(a)$.
    Then $a + c \leq \lfloor a + c(a) \rfloor$. The function $F(q)$ is defined such that $F(q) = \max_{0 \leq a \leq q^5+q^3+1} \lfloor a + c(a) \rfloor$
    for $q$ a prime power. Here we use Lemma~\ref{lem:linespread}.
\end{proof}

Combining Lemma~\ref{lem:rel23}, Lemma~\ref{lem:rel24}, and Lemma~\ref{lem:rel34} shows Theorem~\ref{thm:A_q74_bnd_gen}.

We applied also the strategy of~\cite[Section~4.1]{Honold2016} in the binary case with functions $x_3 \le f'(x_4)$, $x_3 \le g'(x_2)$, and $x_3 \le h'(x_5)$ defined by
\begin{align*}
f'(x) &= \left\lfloor\frac{294(381-x)}{294+x}\right\rfloor, \quad\quad\quad
g'(x)  = \left\lfloor\frac{62(6\sqrt{70}+59)(381-x)}{372\sqrt{70}+3658+9x}\right\rfloor, \text{ and} \\
h'(x) &= \left\lfloor\frac{(13209651-28575x)\sqrt{35}+73499853-192913x}{192913+34671\sqrt{35}-98x}\right\rfloor,
\end{align*}
as implied by the same reasoning as in Lemmata~\ref{lem:rel34},~\ref{lem:rel23}, and~\ref{lem:rel24}.
Denote the previous upper bounds $f^{\text{HKK}}$, $g^{\text{HKK}}$, and $h^{\text{HKK}}$ from~\cite[Lemma~4.2]{Honold2016},~\cite[Lemma~4.3]{Honold2016}, and~\cite[Lemma~4.4]{Honold2016}, respectively.
The bounds $f'$ and $h'$ are stronger than $f^{\text{HKK}}$ and $h^{\text{HKK}}$, respectively, for large arguments while $g^{\text{HKK}}(x) \le g'(x)$ for all $0 \le x \le 41$.
Assuming $x_4 \le x_3$, we have $x_4 \le 151$ by $f'$, improving $x_4 \le 190$ from~\cite[Lemma~4.2.i]{Honold2016}.
Then, as shown in~\cite[Section~4.1]{Honold2016}, if $x_4 \le x_3$ we have the bound
\begin{align*}
x_2+x_3+x_4+x_5
&\le
\max_{\substack{0 \le x_2 \le 41 \\ 0 \le x_5 \le 41}}
x_2+F(\min\{g(x_2),h(x_5)\},\min\{g(x_5),h(x_2)\})+x_5
\text{ with} \\
F(u_3,u_4)
&=
\max_{0 \le x_4 \le \min\{u_3,u_4,151\}}
\min\{u_3,f(x_4)\}+x_4
\end{align*}
in which we fixed an error with the $\max$ in $F$ from~\cite[Section~4.1]{Honold2016}.
Using only the functions implied by the SDP arguments, i.e., $f=f'$, $g=g'$, and $h=h'$, an exhaustive computer calculation determines the right hand side as $432$.
By taking $f=\min\{f',f^{\text{HKK}}\}$, $g=g^{\text{HKK}}$, and $h=\min\{h',h^{\text{HKK}}\}$, the right hand side of the maximization problem is $393$ which improves the $406$ from~\cite[Section~4.1]{Honold2016} but is inferior to Theorem~\ref{thm:A_q74_bnd}.
Nevertheless, this calculation involved only integer computations and is resilient against numerical errors.
Then Corollary~\ref{cor:dim0167} shows $A_2(7,4) \le 394$.

\section{New and Updated SDP Bounds}\label{sec:comps}

Bachoc et al.~\cite{Bachoc2013} provided bounds for network codes with odd distances, but not for even distances or $q>2$.
With the general formulas for triple intersection numbers described in Section~\ref{sec:triples},
we can calculate the corresponding coherent configuration with standard techniques and let a semidefinite
programming solver (here SDPA-GMP\footnote{Some numbers require a higher precision output than what SDPA offers. See \url{https://github.com/ferihr/sdpa-gmp} for a version where the constants \texttt{P\_FORMAT\_obj} and \texttt{P\_FORMAT\_gap} in \texttt{sdpa\_io.h} adjust the output length.}) find a bound on the corresponding problem.
The following tables list bounds on $A_q(n, d)$ for small $q$ and small $n$, complementing and, for $q=2$ and odd $d$, improving the work by Bachoc et al.
New best bounds are \textbf{bold}.
If $q=2$ and $d$ is odd, the new SDP bound is better than the old or there was no previous SDP bound in literature, then the entry is in \textit{italics}.

\begin{table}[H]

\begin{center}
\begin{adjustbox}{max width=\textwidth}
\begin{tabular}{rrrrrrrrrrrrrrrrrrrrr}
$d \setminus n$  & 8                 & 9               & 10                & 11                    & 12                    & 13                       & 14 \\ \midrule 
3                & \textit{\textbf{9191}}  & \textbf{107419} & \textit{\textbf{2531873}} & \textit{\textbf{57201557}}& \textit{\textbf{2685948795}}& \textit{\textbf{119527379616}} & \textit{\textbf{11215665059647}}\\
4               & \textbf{6479}  & \textbf{53710} & \textbf{1705394} & \textbf{28600778} & \textbf{1816165540} & \textbf{59763689822}  & \textit{\textbf{7496516673358}}   \\
5                & \textit{327}            & \textit{2458}         & \textit{48255}   & \textit{\textbf{660265}}  & \textit{\textbf{26309023}}  & \textit{\textbf{688127334}}    & \textit{\textbf{54724534275}}\\
6               &     260   & \textbf{1240}  & 38455   & \textbf{330133}   & \textbf{21362773}   & \textbf{344063682}    & \textit{\textbf{43890879895}}   \\
7                &                   &                 & \textit{1219}             & \textit{8844}             & \textit{\textbf{314104}}    & \textit{\textbf{4678401}}      & \textit{\textbf{330331546}}\\
8               &           &           & 1090        & 4480        & \textbf{279476}     & \textbf{2343888}      & \textit{\textbf{292988615}}   \\
9                &                   &                 &                     &                     & \textit{\textbf{4483}}      & \textit{34058}                & \textit{\textbf{2298622}}\\
10              &           &           &             &             & 4226          & \textbf{17133}        & \textit{\textbf{2164452}}   \\
11               &                   &                 &                     &                     &                       & 259                      & \textit{\textbf{17155}}\\
12              &           &           &             &             &               &                 & \textit{16642}   \\

\end{tabular}
\end{adjustbox}
\caption{SDP bounds on $A_2(n, d)$.}
\label{table:A2nd_odd}
\end{center}
\end{table}

\begin{table}[H]
\begin{center}
\begin{adjustbox}{max width=\textwidth}
\begin{tabular}{rrrrrrrr}
$d \setminus n$ & 6 & 7 & 8        & 9           & 10            & 11              & 12 \\ \midrule
3  &  967   & \textbf{15394} & 760254 & \textbf{34143770} & \textbf{5026344026} & \textbf{675225312722} & \textbf{298950313257852} \\
4  &\textbf{788}  & \textbf{7696}  & \textbf{627384} & \textbf{17071886} & \textbf{4112061519} & \textbf{337612656529} & \textbf{244829520433920} \\
5  &        & 166      & 7222      & 123535   & \textbf{16008007}   & \textbf{818518696}    & \textbf{320387589445} \\
6  &        &          & 6727      & \textbf{61962}    & \textbf{14893814}   & \textbf{409259348}    & \textbf{298571221318} \\
7  &        &          &           & 490         & 61002         &    1076052      & \textbf{400831735} \\
8  &        &          &           &             & 59539         & \textbf{539351}       & \textbf{391178436} \\
9  &        &          &           &             &               &    1462         & 537278 \\
10 &        &          &           &             &               &                 & 532903 \\
\end{tabular}
\end{adjustbox}
\caption{SDP bounds on $A_3(n, d)$.}
\label{table:A3nd}
\end{center}
\end{table}

\begin{table}[H]
\begin{center}
\begin{adjustbox}{max width=\textwidth}
\begin{tabular}{rrrrrrrr}
$d \setminus n$ & 6 & 7& 8          & 9            & 10              & 11   \\ \midrule
3  &  4772  &\textbf{142313} &20482322 &\textbf{2341621613} &\textbf{1343547758223} &\textbf{614496020025690}\\
4  &\textbf{4231} &\textbf{71156}  &\textbf{18245203} &\textbf{1170810807} &\textbf{1194101275238} &\textbf{307248010015067}\\
5  &        & 516      & 68117      & \textbf{2132181}   &\textbf{1122729102}    &\textbf{140323867490} \\
6  &        &          & 66054      & \textbf{1067796}   &\textbf{1088550221}    &\textbf{70161933745} \\
7  &        &          &            & 2052         & 1058831         & 33669242 \\
8  &        &          &            &              & 1050630         & 16847095 \\
9  &        &          &            &              &                 & 8196 \\
\end{tabular}
\end{adjustbox}
\caption{SDP bounds on $A_4(n, d)$.}
\label{table:A4nd}
\end{center}
\end{table}

\begin{table}[H]
\begin{center}
\begin{adjustbox}{max width=\textwidth}
\begin{tabular}{rrrrrrrr}
$d \setminus n$ & 6 & 7 & 8          & 9             & 10     \\ \midrule
3  & 17179  &\textbf{821170} &277100135 &\textbf{64262978412} & \textbf{108238287449582}\\
4  &\textbf{15883}&\textbf{410585} &\textbf{256754528} &\textbf{32131489207} & \textbf{100215014898311}\\
5  &        & 1254     & 398154      &\textbf{19675409}    & \textbf{31196584033}\\
6  &        &          & 391883      &\textbf{9847885}     & \textbf{30703887393}\\
7  &        &          &             & 6254          & 9803150\\
8  &        &          &             &               & 9771883\\
\end{tabular}
\end{adjustbox}
\caption{SDP bounds on $A_5(n, d)$.}
\label{table:A5nd}
\end{center}
\end{table}

\begin{table}[H]
\begin{center}
\begin{adjustbox}{max width=\textwidth}
\begin{tabular}{rrrrrrrr}
$d \setminus n$ & 6 & 7   & 8             & 9               & 10     \\ \midrule
3  & 123239  &\textbf{11807778} &14753449680 &\textbf{9728400942608} &\textbf{85039309360944189}\\
4  &\textbf{118347}&\textbf{5903889}  &\textbf{14176726504} &\textbf{4864200471305} &\textbf{81703574152063079}\\
5  &         & 4806       & 5803270       &\textbf{566262547}     &\textbf{4784663914039}  \\
6  &         &            & 5769615       &\textbf{283240686}     &\textbf{4756893963688}  \\
7  &         &            &               & 33618           &   282744208   \\
8  &         &            &               &                 &   282508875  \\
\end{tabular}
\end{adjustbox}
\caption{SDP bounds on $A_7(n, d)$.}
\label{table:A7nd}
\end{center}
\end{table}

We added these bounds and will continuously add data on the SDP bound for larger numbers on \url{http://subspacecodes.uni-bayreuth.de/}, cf.~\cite{HKKW2016Tables}.

Compared to~\cite{honold2018johnson}, in which the authors prove upper bounds based on Johnson type arguments in the mixed dimension setting, our SDP method is stronger, e.g. for $A_q(7,3)$ ($3 \le q \le 7$) and $A_2(8,3)$, and sometimes weaker, e.g. for $A_2(10,5)$, $A_2(10,6)$, $A_2(11,7)$, $A_2(13,9)$, $A_3(9,5)$, and $A_q(8,3)$ ($3 \le q \le 7$).

\section{Future Work}\label{sec:future}

An obvious open problem is to show the bound of Theorem~\ref{thm:A_q74_bnd} for general $q$.
This might be of larger interest as it is usually very hard to optimize SDP problems with parameters
except for certain special cases. For all bounds an interesting question is if we can find constructions
which match them.

In~\cite{Schrijver2005} Schrijver successfully improved the best known bounds for constant weight codes
with semidefinite programming and a symmetry condition. It is an interesting question if Schrijver's method can also
provide better bounds on constant dimension codes. The answer to this negative. The second author was informed by Hajime Tanaka
that the centralizer algebra of a $k$-space was essentially calculated by Dunkl~\cite{Dunkl1978} in 1978 and presented 
in a more explicit way by Yuta Watanabe in his Master's thesis~\cite{Watanabe2015} in 2015. With this it is easy to calculate 
Schrijver's SDP bound for constant dimension codes. Surprisingly, Schrijver's SDP bound does not 
improve any known bounds for $A_2(n, d; k)$ for any $n \leq 20$, see the blog post on this\footnote{\url{https://ratiobound.wordpress.com/2018/10/11/schrijvers-sdp-bound-for-network-codes/}, retrieved at 20:49 MET on 29/06/2019.} by the second author.

\section*{Acknowledgments} The second author thanks Jason Williford for many interesting discussions
about semidefinite programming in coherent configurations. We would like to thank the SDPA developers
for providing this excellent software, in particular Maho Nakata for the GMP version of it.

\end{document}